\documentclass[11pt, reqno]{amsart}
\usepackage{amsmath,amssymb,mathtools}
\usepackage{microtype}
\usepackage[margin=1.08in]{geometry}
\usepackage{enumitem}
\usepackage{booktabs,longtable,array}
\usepackage{xcolor}
\usepackage[colorlinks=true,linkcolor=blue!55!black,citecolor=blue!55!black,urlcolor=blue!55!black]{hyperref}
\usepackage{fancyhdr}

\numberwithin{equation}{section}
\newtheorem*{Brayconjecture}{Bray's conjecture}
\newtheorem{theorem}{Theorem}[section]
\newtheorem{proposition}[theorem]{Proposition}
\newtheorem{lemma}[theorem]{Lemma}
\newtheorem{corollary}[theorem]{Corollary}
\theoremstyle{remark}
\newtheorem{remark}[theorem]{Remark}

\DeclareMathOperator{\Ric}{Ric}

\newcommand{\Sph}{\mathbb S}
\newcommand{\dbarV}[1]{d\overline V_{#1}}

\newcommand{\eps}{\varepsilon}
\newcommand{\MW}{\mathrm{MW}}
\newcommand{\ent}{\mathrm{ent}}

\newcommand{\W}{\mathcal{W}}

\keywords{Volume comparison, Scalar curvature, Bray's conjecture}
\subjclass{53C20, 53C21, 53C18}

\author{Xumin Jiang}
\address{Xumin Jiang, School of Sciences, Great Bay University, Dongguan 523000, China}
\email{xjiang@gbu.edu.cn}

\author{Mingxiang Li}
\address{Mingxiang Li, Department  of Mathematics \& Institute of Mathematical Sciences, The Chinese University of Hong Kong}
\email{limx@smail.nju.edu.cn}

\author{Zhehui Wang}
\address{Zhehui Wang, School of Sciences, Great Bay University, Dongguan 523000, China}
\email{wangzhehui@gbu.edu.cn}

\title{On the proof of Bray's conjecture}

\date{}

\begin{document}

\begin{abstract}

Let $(M^n,g)$ be a connected, complete, smooth  Riemannian manifold with  dimension $n\geq 3$.  There exists a positive constant $\varepsilon_n<1$ such that,  if   Ricci curvature $\Ric_g\geq \varepsilon_n(n-1)g$ and the scalar curvature $R_g\geq n(n-1)$, then the volume $V_g(M^n)$ is less than or equal to the volume of standard $n$-sphere. This confirms a conjecture by Bray in 1997.
\end{abstract}

\maketitle

\section{Introduction}
Let $(M^n,g)$ be a connected, complete, smooth Riemannian  manifold. The classical Bishop's volume comparison theorem shows that if Ricci curvature satisfies 
\begin{equation}\label{Ricci-geq-n-1}
    \Ric_g\geq (n-1)g,
\end{equation}
 then the volume $V_g(M^n)$ satisfies a sharp upper bound
 \begin{equation}\label{volume-upper-bound}
   V_g(M^n)\leq |\mathbb{S}^n|  
 \end{equation}
where $|\Sph^n|$ denotes the volume of standard $n$-sphere. Such a volume comparison theorem plays a fundamental role in differential geometry. It is natural to ask whether the condition \eqref{Ricci-geq-n-1} can be relaxed. It is known that if we only assume the scalar curvature $R_g$ satisfies
\begin{equation}\label{R_g-lower-bound}
    R_g \geq n(n-1),
\end{equation}
then the volume estimate \eqref{volume-upper-bound} may no longer hold (see \cite{CEM}). In his thesis \cite{Bray}, Bray asked whether \eqref{volume-upper-bound} remains true if we slightly relax the lower bound on the Ricci tensor while still assuming \eqref{R_g-lower-bound}. We now state Bray's conjecture precisely (see Conjecture 4 in \cite{Bray}).
\begin{Brayconjecture}\label{Bray's-conjecture-origin}
    Let $(M^n,g)$ be a connected, complete, smooth Riemannian manifold with $n\geq 3$. There exists a positive constant $\varepsilon_n<1$ such that, if the Ricci tensor satisfies 
    \begin{equation}\label{Bray'scondition-Ricci}
      \Ric_g\geq \varepsilon_n(n-1)g  
    \end{equation}
    and the scalar curvature satisfies 
    \begin{equation}\label{Bray'scondion-for-R_g}
       R_g\geq n(n-1), 
    \end{equation}
 then
 the volume satisfies 
 \begin{equation}\label{Bray's-volume-comparison}
     V_g(M^n)\leq |\Sph^n|.
 \end{equation}
\end{Brayconjecture}

There are many excellent works contributing to this conjecture. First, Bray \cite{Bray} confirmed this conjecture in dimension three, and numerical tests suggest $0.134 < \varepsilon_3 < 0.135$; this is known as Bray's football theorem. Later, Gursky and Viaclovsky \cite{Gursky-Via} showed that $\varepsilon_3 \leq 1/2$, and Brendle \cite{Brendle} proved the rigidity for the case $\varepsilon_3 = 1/2$. Yuan \cite{Yuan} proved it under the assumption that the metric is sufficiently close to the standard sphere in the $C^2$ sense. Recently, Zhang \cite{Zhang} proved it under an additional assumption that $\operatorname{Ric}_g \leq C g$ for some positive constant $C$, or that $(M^n,g)$ is axisymmetric. Very recently, Kwong \cite{Kwong} showed that if, for some constant $\varepsilon \geq 0$,
\begin{equation}\label{Kwong's-condition}
    \operatorname{Ric}_g \geq (n-1)g \quad \text{and} \quad R_g \geq n(n-1)(1+\varepsilon),
\end{equation}
then the volume satisfies 
\begin{equation}\label{Kwong'sresult}
   V_g(M^n) \leq (1 + n \varepsilon)^{-\frac{1}{2}} |\mathbb{S}^n|.
\end{equation}
Up to scaling, Bray's conjecture states that the volume should have the upper bound
\begin{equation}\label{Bray's-conjecture}
     V_g(M^n) \leq (1 + \varepsilon)^{-\frac{n}{2}} |\mathbb{S}^n|.
\end{equation}
A simple Taylor expansion shows that Kwong's bound \eqref{Kwong'sresult} agrees with Bray's conjecture \eqref{Bray's-conjecture} up to terms of order $\varepsilon^2$.

Our main result in this paper is stated as follows.
\begin{theorem}\label{thm:main-theorem}
   The  Bray's conjecture holds true. {Moreover, the equality in \eqref{Bray's-volume-comparison} holds if and only if $(M^n, g)$  is isometric to standard sphere.}
\end{theorem}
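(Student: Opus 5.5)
We will compare the volume of $M$ with that of the round sphere by a Bishop--Gromov type argument in which the Ricci lower bound $\varepsilon_n(n-1)$ only controls the geometry near the cut locus. The scalar curvature bound $R_g\ge n(n-1)$ is used through the Riccati inequality along geodesics. By Myers' theorem, $\Ric_g\ge\varepsilon_n(n-1)g$ makes $M$ compact with $\operatorname{diam}(M)\le \pi/\sqrt{\varepsilon_n}$. Fix a point $p$ and write $dV_g=\mathcal A(t,\theta)\,dt\,d\theta$ in polar coordinates up to the cut distance $c(\theta)$. Set $J=\mathcal A^{1/(n-1)}$ and let $m=(\log\mathcal A)'$ be the mean curvature of the distance spheres.

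\textbf{Step 1: trace identity.} We will replace the single Ricci term in the Riccati equation
\[
m'+|A|^2+\Ric(\partial_t,\partial_t)=0
\]
by scalar curvature using the Gauss equation, $\Ric(\partial_t,\partial_t)=\tfrac12\bigl(R_g-R_{\Sigma_t}+m^2-|A|^2\bigr)$. This eliminates the normal direction at the cost of the intrinsic scalar curvature $R_{\Sigma_t}$ of the level sets. Integrating over $\Sigma_t$ and using Gauss--Bonnet in dimension $3$, or in general the weak information coming from $\Ric\ge\varepsilon_n(n-1)$ together with the doubling and tripling of $|A|^2$ terms, should yield an averaged inequality for $V(t)=|\Sigma_t|$ of the form
\[
\frac{d}{dt}\Bigl(\frac{V'}{V}\Bigr)+\frac{n}{2(n-1)}\Bigl(\frac{V'}{V}\Bigr)^2\cdot\kappa_n+\frac{n}{2}\le \text{(error)}.
\]
Here the constants must reproduce the round model $\sin^{n-1}t$.

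\textbf{Step 2: handling the error.} The error is controlled by the Ricci lower bound only when $t$ is either small or close to the diameter. This is where $\varepsilon_n$ is chosen: near $p$, the Euclidean asymptotics already give the right volume growth, and near the far end the Ricci bound controls how fast $\mathcal A$ can decay. In the middle range we will use the scalar inequality to compare $V(t)$ with the football/model profile, as in Bray's ODE argument.

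\textbf{Step 3: integration.} We will integrate the comparison to conclude $V_g(M)\le|\Sph^n|$. For the equality case, we will trace back equality in every inequality used, which forces $|A|^2=m^2/(n-1)$ and $\Ric=(n-1)g$. Obata/Bishop rigidity then gives the round sphere.

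The main obstacle is Step 1 in dimensions $n\ge4$. There the intrinsic term $R_{\Sigma_t}$ has no Gauss--Bonnet control, and an averaged argument seems insufficient. The first thing to try is to choose $p$ as a minimum point of an auxiliary function, or to work with the isoperimetric profile $I(v)$ instead of distance spheres. One would then derive the Bray-type differential inequality $I''I\le -\tfrac{(I')^2}{n-1}-\tfrac{n}{2}\cdots$ on isoperimetric regions, whose boundaries are stable CMC. Stability combined with $R_g\ge n(n-1)$ replaces Gauss--Bonnet. The Ricci bound $\varepsilon_n$ is needed to guarantee that the isoperimetric regions are smooth and connected with controlled geometry, so that the ODE comparison closes.
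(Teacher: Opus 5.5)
Your outline is Bray's three-dimensional strategy, and the step that would have to carry the proof for $n\ge4$ is exactly the one you leave open. As it stands, there is no proof.

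Combining the Riccati equation with the traced Gauss equation and $|A|^2\ge m^2/(n-1)$ gives, pointwise along each radial geodesic,
\[
m'+\frac{n}{2(n-1)}m^2+\frac{n(n-1)}{2}\le\frac12R_{\Sigma_t},
\]
with equality on the round sphere. So the ``error'' in Step 1 is $\frac12R_{\Sigma_t}$ itself, at every $t$. The bound $\Ric_g\ge\eps_n(n-1)g$ gives no upper bound on it, since via Gauss $R_{\Sigma_t}$ involves $R_g$ and the ambient sectional curvatures tangent to $\Sigma_t$, neither of which is bounded above. Step 2's plan to use ``the scalar inequality'' in the middle range is therefore circular.

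Even the averaging in Step 1 fails for distance spheres. From $V''=\int_{\Sigma_t}(m'+m^2)$ you get $V''\le\frac{n-2}{2(n-1)}\int_{\Sigma_t}m^2+\cdots$, which needs an upper bound on $\int_{\Sigma_t}m^2$. Cauchy--Schwarz only gives $\int_{\Sigma_t}m^2\ge(V')^2/V$. Moreover, beyond the cut locus $\Sigma_t$ is not a smooth hypersurface, so Gauss--Bonnet is unavailable even for $n=3$. This is why Bray worked with isoperimetric surfaces, on which $m$ is constant.

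The isoperimetric fallback does not close the gap either. For an isoperimetric boundary $\Sigma$ with unit normal $N$, the constant-speed variation gives $I''I^2\le-\int_\Sigma(\Ric(N,N)+|A|^2)$. After the Gauss equation, an ODE comparison with the round model needs
\[
\int_\Sigma R_\Sigma\le(n-1)(n-2)\,|\Sph^{n-1}|^{\frac{2}{n-1}}\,|\Sigma|^{\frac{n-3}{n-1}} .
\]
For $n=3$ this is Gauss--Bonnet: $\int_\Sigma R_\Sigma=4\pi\chi(\Sigma)\le8\pi$ for connected $\Sigma$. For $n\ge4$, however, the normalized total scalar curvature of a hypersurface is not controlled by topology.

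Stability points the wrong way. Inserting the Gauss equation into the stability inequality gives $\int_\Sigma\bigl(|\nabla\phi|^2+\frac12R_\Sigma\phi^2\bigr)\ge\frac12\int_\Sigma(R_g+H^2+|A|^2)\phi^2$ for $\int_\Sigma\phi=0$. That is a Schoen--Yau type \emph{lower} bound on $R_\Sigma$, not the upper bound required. Your equality discussion in Step 3 inherits the same gap.

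The paper sidesteps hypersurface geometry entirely, in four steps.
\begin{itemize}
\item It rescales to $\Ric_g\ge(n-1)g$, $R_g\ge n(n-1)(1+\eps)$. Its $\eps_n=(1+\tilde{\eps}_n)^{-1}$ is therefore close to $1$, a much weaker requirement than the small $\eps_n$ your Step 2 has in mind.
\item Scalar curvature then enters Perelman's $\W$-entropy linearly, and spherical P\'olya--Szeg\H{o} plus Beckner's log-Sobolev inequality give $\nu(g)\ge\nu(g_{\Sph^n})+\log\frac{V_g(M)}{|\Sph^n|}+\frac n2\log(1+\eps)$.
\item Ma--Wang's integral pinching theorem, together with monotonicity of $\nu$ along normalized Ricci flow, gives $\nu(g)\le\nu(g_{\Sph^n})$.
\item Rigidity follows from the equality case of Perelman's monotonicity.
\end{itemize}
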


Let us briefly introduce the strategy of the argument. Under the normalization \eqref{Kwong's-condition}, we consider Perelman's entropy $\W_g(u,\tau)$ and its infimum $\nu(g)$. Using the spherical P\'olya–Szeg\"o theorem and Beckner's inequality, we show that if \eqref{Bray's-conjecture} fails, then $\nu(g)$ is strictly greater than the entropy limit of the $n$-sphere. However, by applying Ma–Wang's result \cite{MaWang}, we prove that this cannot happen. 

This paper is organized as follows. In Section \ref{sec:pre}, we review several known results from the literature. In Section \ref{sec:keylemma}, we present some key lemmas. Section \ref{sec:proof} is devoted to completing the proof of Theorem \ref{thm:main-theorem}. Finally, in Section \ref{sec:application}, we employ a Bray-type volume comparison argument to derive the corresponding volume comparison theorem in terms of the $Q$-curvature.

\vspace{3em}
{\bf Disclosure on AI assistance.} The authors used AI-assisted tools, principally ChatGPT. The authors wrote and verified all theorem statements, proofs, and they take full responsibility for the contents of the paper.

{\bf Acknowledgement.} This is our submitted version. We notice that Fan and Yu \cite{fanyu} independently proved the rigidity in the equality case concurrently.  X. Jiang is partially supported by Guangdong Basic and Applied Basic Research Foundation (Grant No. 2024A1515140083), Guangdong Provincial Project (Grant No. 2024QN11X020),
and NSFC Grant 12571211. Z. Wang is partially supported by NSFC Grant 12401247 and Guangdong Basic and Applied Basic Research Foundation (Grant No.\,{2025A1515140243}).

\section{Preliminaries}\label{sec:pre}

In this section, we introduce four key ingredients that will be used in the proof of Bray’s conjecture.  For the readers' convenience, we repeat their statements below.

Let $(M^n,g)$ be a connected, closed, smooth Riemannian manifold. 
For brevity, we suppress the superscript  $n$ and simply denote the manifold by $M$ and set the normalized volume form 
$$\; \dbarV g:=V_g(M)^{-1}\; d V_g.$$
{The average scalar curvature is denoted by
\begin{equation}\label{eq:average-scalar}
\overline R_g:=\frac1{V_g(M)}\int_MR_g\,dV_g.
\end{equation}}
\subsection{\bf Perelman’s $\W$-entropy}
For a constant $\tau>0$ and a nonnegative function $v\in H^1(M)$ satisfying
\begin{equation}\label{eq:L2-normalization}
\int_Mv^2\,\dbarV g=1,
\end{equation}
we consider Perelman’s entropy \cite{Perelman} under the normalized volume form,
\begin{align}
\W_g(v,\tau):={}&4\tau\int_M|\nabla v|_g^2\,\dbarV g
+\tau\int_MR_gv^2\,\dbarV g\notag\\
&-\int_Mv^2\log v^2\,\dbarV g
+\log V_g(M)-\frac n2\log(4\pi\tau)-n.\label{eq:W-probability}
\end{align}
It is equivalent to Perelman's $f$-variable form entropy in \cite{Perelman} by writing
\begin{align}
v=V_g(M)^{1/2}(4\pi\tau)^{-n/4}e^{-f/2}.
\end{align}
 Meanwhile, as in \cite{Perelman}, we define
\begin{equation}\label{eq:mu-nu-definitions}
\mu(g,\tau):=\inf\left\{\W_g(v,\tau):
v\in C^\infty(M),\ v\ge0,\ \int_Mv^2\,\dbarV g=1\right\},
\end{equation}
and 
\begin{equation}\label{eq:mu-nu-definitions-2}
\nu(g):=\inf_{\tau>0}\mu(g,\tau),
\end{equation}
with the convention that $\W_{g_{\Sph^n}}(v,\tau)$, $\mu(g_{\Sph^n},\tau)$ and $\nu(g_{\Sph^n})$ denote the corresponding quantities defined on the standard sphere $(\Sph^n,g_{\Sph^n})$.

With the help of some standard argument, the same $\mu(g,\tau)$ is also obtained from
nonnegative normalized $H^1(M)$ functions,
\begin{equation}\label{eq:H1-mu}
\mu(g,\tau)=\inf\left\{W_g(v,\tau):v\in H^1(M),\ v\geq0,
\ \int_Mv^2\,\dbarV g=1\right\}.
\end{equation}

We will use the following facts about the Perelman's entropy.
\begin{proposition}\label{prop:invariance}
Let $\mu$ be defined as in \eqref{eq:mu-nu-definitions}.
\begin{itemize}
    \item[(1)] For every constant $c>0$, diffeomorphism $\Phi$, and constant $\tau>0$, one has
\begin{equation}\label{eq:scaling-mu}
\mu(cg,c\tau)=\mu(g,\tau),
\qquad
\nu(cg)=\nu(g),
\end{equation}
and
\begin{equation}\label{eq:diffeo-mu}
\mu(\Phi^*g,\tau)=\mu(g,\tau),
\qquad
\nu(\Phi^*g)=\nu(g).
\end{equation}
\item[(2)] Let $g(t)$ solve the unnormalized Ricci flow
$\partial_tg=-2\Ric_g$ on a connected, closed, smooth Riemannian manifold. If $0<t<T$, then
\begin{equation}\label{eq:mu-monotonicity}
\mu(g(0),T)\leq\mu(g(t),T-t).
\end{equation}
Consequently, $\nu(g(t))$ is nondecreasing.
\end{itemize}

\end{proposition}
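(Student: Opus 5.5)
For part (1), the plan is to transform test functions and check that $\W$ is unchanged. Under the normalized volume form, replacing $g$ by $cg$ gives $\dbarV{cg}=\dbarV g$, since the factor $c^{n/2}$ cancels. Hence the constraint \eqref{eq:L2-normalization} is the same for $g$ and $cg$, and I will use the same function $v$ as a test function for both. The individual terms scale as follows:
\begin{itemize}
\item $|\nabla v|_{cg}^2=c^{-1}|\nabla v|_g^2$ and $R_{cg}=c^{-1}R_g$, so with $\tau$ replaced by $c\tau$ the first two terms of \eqref{eq:W-probability} are unchanged.
\item The entropy term $\int v^2\log v^2\,\dbarV g$ is unchanged.
\item $\log V_{cg}(M)=\log V_g(M)+\tfrac n2\log c$ and $\tfrac n2\log(4\pi c\tau)=\tfrac n2\log(4\pi\tau)+\tfrac n2\log c$, so the two $\log c$ contributions cancel.
\end{itemize}
Thus $\W_{cg}(v,c\tau)=\W_g(v,\tau)$. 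Taking the infimum over $v$ gives $\mu(cg,c\tau)=\mu(g,\tau)$. Taking the infimum over $\tau>0$, and noting that $\tau\mapsto c\tau$ is a bijection of $(0,\infty)$, gives $\nu(cg)=\nu(g)$.

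For a diffeomorphism $\Phi$, the map $v\mapsto v\circ\Phi$ is a bijection between admissible test functions for $g$ and for $\Phi^*g$. Every quantity in the functional is natural under pullback, so $\W_{\Phi^*g}(v\circ\Phi,\tau)=\W_g(v,\tau)$, and the invariance of $\mu$ and $\nu$ follows.

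For part (2), I would rely on Perelman's monotonicity formula and translate it into the $v$-variables. Fix $0<t<T$ and set $\tau(s)=T-s$. The steps are:
\begin{itemize}
\item Take a minimizer $f_t$ of $\mu(g(t),T-t)$ in Perelman's $f$-form. Its existence and smoothness are standard on closed manifolds (Rothaus); alternatively, one can take an arbitrary smooth positive test function and pass to the infimum at the end, which avoids needing a minimizer.
\item Solve the backward conjugate heat equation
\[
\partial_s f=-\Delta f+|\nabla f|^2-R+\tfrac{n}{2\tau}
\]
on $[0,t]$ with final data $f(t)=f_t$. This is well posed backward because $u=(4\pi\tau)^{-n/2}e^{-f}$ solves the linear equation $\partial_s u=-\Delta u+Ru$, and $u$ stays positive by the maximum principle.
\item Along this pair, $\int u\,dV$ is preserved. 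Perelman's formula gives
\[
\frac{d}{ds}\W=2\tau\int\bigl|\Ric+\nabla^2f-\tfrac{1}{2\tau}g\bigr|^2u\,dV\ge0.
\]
\item Converting via $v=V_g(M)^{1/2}(4\pi\tau)^{-n/4}e^{-f/2}$ yields an admissible $v$ at time $0$. The $\log V_g(M)$ term in \eqref{eq:W-probability} is exactly what makes the normalized form agree with Perelman's unnormalized $\W$, so monotonicity transfers directly. Hence
\[
\mu(g(0),T)\le\W_{g(0)}(v(0),T)\le\W_{g(t)}(v(t),T-t)=\mu(g(t),T-t).
\]
If no minimizer is used, the final equality is replaced by $\W_{g(t)}(v_t,T-t)$ for an arbitrary test function $v_t$, followed by taking the infimum.
\end{itemize}

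To get monotonicity of $\nu$, fix $t$ and any $\tau>0$, and set $T=t+\tau$. Then
\[
\nu(g(0))\le\mu(g(0),t+\tau)\le\mu(g(t),\tau).
\]
Taking the infimum over $\tau>0$ gives $\nu(g(0))\le\nu(g(t))$. Since the flow is time-translation invariant, applying this with any starting time $t_1<t_2$ shows that $\nu(g(t))$ is nondecreasing.

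The main obstacle is the bookkeeping in verifying that the normalized functional \eqref{eq:W-probability} coincides with Perelman's $\W(g,f,\tau)$ under the stated substitution. In that computation, $\log V_g(M)$ must cancel against $\int v^2\log v^2$. The other technical point is justifying the backward solvability of the conjugate heat equation and the positivity of $u$. Both are standard, so I would cite \cite{Perelman} for the monotonicity identity itself.
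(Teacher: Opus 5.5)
Your proposal is correct and follows the paper's route. For part (2) you use Perelman's monotonicity along a backward solution of the conjugate heat equation started from a minimizer (or an arbitrary test function) at time $t$, then the time-shift argument $\nu(g(t_1))\le\mu(g(t_1),\sigma+t_2-t_1)\le\mu(g(t_2),\sigma)$ for $\nu$. The differences are cosmetic: you verify part (1) explicitly where the paper only cites Perelman, and you work in Perelman's $f$-form with $\int u\,dV=1$. The paper instead uses $u=v^2=V_g(M)(4\pi\tau)^{-n/2}e^{-f}$ as a density with respect to $\dbarV g$, which solves $\partial_tu=-\Delta u+(R-\overline R)u$; this is equivalent to your formulation.
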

\begin{proof}
The proof could be found in \cite{Perelman} and related books about the Ricci flow, and we just sketch the proof of \eqref{eq:mu-monotonicity} here. Perelman \cite[Section~3, equations (3.1)-(3.4)]{Perelman} proved that, 
{for $$v(t)=\sqrt{u(t)} \text{ with }\int_Mv^2\dbarV {g(t)}=1,$$ and $\tau(t)>0$ with $\tau'(t)=-1$, one has
\begin{equation}\label{eq:Perelman-W-derivative}
\frac{d}{dt}\mathcal W_{g(t)}(v(t), \tau(t))=2\tau(t)\int_M\left|\Ric_{g(t)}-2\nabla^2\log v(t)-\frac{g(t)}{2\tau(t)}\right|^2v(t)^2\dbarV{g(t)}\geq0. 
\end{equation}
Here $u(t)$ solves
\begin{equation}
    \partial_tu=-\Delta_{g(t)}u+(R_{g(t)}-\overline{R}_{g(t)})u.
\end{equation}}Starting at time $t$ from a smooth minimizer for
$\mu(g(t),T-t)$ and solving the conjugate heat equation backward gives \eqref{eq:mu-monotonicity}. For $0\leq t_1<t_2$, by the time-shifted flow and \eqref{eq:mu-monotonicity}, we have, for every $\sigma>0$,
\[
\nu(g(t_1))\leq\mu(g(t_1),\sigma+t_2-t_1)
\leq\mu(g(t_2),\sigma).
\]
Taking the infimum over $\sigma$ proves monotonicity of $\nu$. 
\end{proof}

\begin{corollary}
\label{cor:normalized-monotonicity}
Let $\nu$ be defined as in \eqref{eq:mu-nu-definitions-2}. Let $g(s)$ be a smooth solution of the volume-normalized Ricci flow
\begin{equation}\label{eq:normalized-RF}
\partial_sg
=
-2\Ric_g+\frac{2}{n}\overline{R}_g\,g,
\end{equation}
Then $s\mapsto\nu(g(s))$ is nondecreasing.
\end{corollary}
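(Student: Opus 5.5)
The plan is to show that the volume-normalized Ricci flow \eqref{eq:normalized-RF} is, up to scaling and reparametrization of time, an unnormalized Ricci flow. Then the monotonicity of $\nu$ from Proposition~\ref{prop:invariance}(2) transfers directly, because $\nu$ is scale invariant by \eqref{eq:scaling-mu} and diffeomorphism invariant by \eqref{eq:diffeo-mu}.

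Let $g(s)$ solve \eqref{eq:normalized-RF} on $[0,S)$. Define
\[
c(s):=\exp\Bigl(-\tfrac{2}{n}\int_0^s\overline R_{g(\sigma)}\,d\sigma\Bigr),\qquad
t(s):=\int_0^s c(\sigma)\,d\sigma,
\]
and set $\tilde g(t(s)):=c(s)\,g(s)$. Since $c>0$, the map $s\mapsto t(s)$ is a smooth, strictly increasing bijection onto its image, so $\tilde g$ is well defined on an interval $[0,T)$.

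Differentiating, we get
\[
\partial_s(c g)=c'g+c\,\partial_s g=-\tfrac{2}{n}\overline R_g\,c\,g+c\bigl(-2\Ric_g+\tfrac{2}{n}\overline R_g\,g\bigr)=-2c\Ric_g .
\]
Ricci curvature is invariant under constant scaling, so $\Ric_{cg}=\Ric_g$. Hence
\[
\partial_t\tilde g=\frac{1}{c}\,\partial_s(cg)=-2\Ric_{g}=-2\Ric_{\tilde g},
\]
and $\tilde g$ solves the unnormalized Ricci flow. Proposition~\ref{prop:invariance}(2) then says $t\mapsto\nu(\tilde g(t))$ is nondecreasing. By \eqref{eq:scaling-mu}, $\nu(g(s))=\nu(c(s)g(s))=\nu(\tilde g(t(s)))$. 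Since $t(s)$ is increasing in $s$, the function $s\mapsto\nu(g(s))$ is nondecreasing.

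The only point that needs care is the hypothesis in Proposition~\ref{prop:invariance}(2) that the manifold is closed. This is the standing assumption of the section, and a smooth solution on a compact manifold makes $\overline R_g$ continuous in $s$, so $c$ and $t$ are $C^1$. If one also wants to use the restriction $0<t<T$ there, one simply applies the proposition on compact subintervals of the flow's existence interval. Beyond these points the argument is just the computation above; I expect no genuine obstacle.
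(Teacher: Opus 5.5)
Your proposal is correct and matches the paper's proof: the same rescaling factor $\exp\bigl(-\tfrac{2}{n}\int_0^s\overline R_{g}\bigr)$ and time change $t(s)=\int_0^s\alpha$ turn the normalized flow into an unnormalized Ricci flow, and then monotonicity of $\nu$ together with scale invariance \eqref{eq:scaling-mu} finishes the argument. The only cosmetic difference is that you cite the monotonicity of $\nu$ directly from Proposition~\ref{prop:invariance}(2), whereas the paper re-derives it from \eqref{eq:mu-monotonicity} by taking the infimum over $\tau$.
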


\begin{proof}
Define
\begin{equation}\label{eq:alpha-def}
\alpha(s)
:=
\exp\left(-\frac{2}{n}\int_0^s\overline{R}_{g(r)}\,dr\right),
\qquad
 t(s):=\int_0^s\alpha(r)\,dr,
\end{equation}
and set
$$
\widetilde g(t(s)):=\alpha(s)g(s).
$$
Since the Ricci tensor as a $(0,2)$-tensor is unchanged by constant rescaling and $t'(s)=\alpha(s)$, we know
$$
\partial_t\widetilde g=-2\Ric_{\widetilde g}.
$$
Thus $\widetilde g$ is an unnormalized Ricci flow. For $0\le t_1<t_2$, by \eqref{eq:mu-monotonicity}, we have, for every $\tau>0$,
$$
\nu(\widetilde g(t_1))
\le
\mu(\widetilde g(t_1),\tau+t_2-t_1)
\le
\mu(\widetilde g(t_2),\tau).
$$
Taking the infimum over $\tau>0$, we obtain that $t\mapsto\nu(\widetilde g(t))$ is nondecreasing. By scale invariance \eqref{eq:scaling-mu},
$$
\nu(\widetilde g(t(s)))=\nu(\alpha(s)g(s))=\nu(g(s)).
$$
Because $t(s)$ is strictly increasing, the conclusion follows.
\end{proof}

\subsection{\bf P\'olya-Szeg\H{o} theorem}

Let $v\ge0$ be a measurable function defined on $M$. Its normalized distribution function is
$$
F_v(t):={\overline V}_g(\{v>t\}), \qquad t\ge0.
$$
where
${\overline V}_g(\{v>t\})$ denotes the measure of the set $\{x\in M|v(x)>t\}$ with respect to  $\dbarV g$.
The spherical decreasing rearrangement $v^*$ is a  radially decreasing function on $\mathbb S^n$, defined up to null sets, such that
\begin{equation}\label{eq:equimeasurable}
{\overline V}_{\Sph^n}(\{v^*>t\})=F_v(t),\qquad\text{for every }t\ge0.
\end{equation}
\begin{theorem}\label{thm:PolyaSzego}
Suppose that  $\Ric_g\geq(n-1)g$. If
$u\in H^1(M)$ is nonnegative, then $u^\star\in H^1(\Sph^n)$ and
\begin{equation}\label{eq:PS-energy}
\int_M|\nabla u|_g^2\,\dbarV g
\geq\int_{\Sph^n}|\nabla u^\star|_{g_{\Sph^n}}^2\,\dbarV {\Sph^n}.
\end{equation}
Moreover, for every Borel measurable function $F$ for which either side is integrable,
\begin{equation}\label{eq:PS-pushforward}
\int_MF(u)\,\dbarV g
=\int_{\Sph^n}F(u^\star)\,\dbarV {\Sph^n}.
\end{equation}
\end{theorem}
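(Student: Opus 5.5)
Under $\Ric_g\ge (n-1)g$, the natural route to Theorem~\ref{thm:PolyaSzego} is the Gromov--L\'evy isoperimetric inequality, followed by the coarea formula, exactly as in the Euclidean P\'olya--Szeg\H{o} proof.

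\emph{Equimeasurability.} The identity \eqref{eq:PS-pushforward} comes first and is elementary. By \eqref{eq:equimeasurable}, $u$ under $\dbarV g$ and $u^\star$ under $\dbarV{\Sph^n}$ have the same distribution function, so they push forward to the same probability measure on $[0,\infty)$. Hence $\int F(u)$ and $\int F(u^\star)$ coincide for every Borel $F$ for which either side is integrable. This uses the layer-cake representation, first for indicators $F=\chi_{(t,\infty)}$, then for general $F$ by monotone class and splitting into positive and negative parts. In particular $\|u^\star\|_{L^2}=\|u\|_{L^2}$.

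\emph{Isoperimetric input.} For the energy inequality I would first reduce to $u$ smooth (or Lipschitz) and nonnegative. The general $H^1$ case then follows by density, since $u\mapsto u^\star$ is a contraction in $L^2$ (a standard rearrangement fact) and the Dirichlet energy is weakly lower semicontinuous. This also yields $u^\star\in H^1(\Sph^n)$. The key input is the L\'evy--Gromov inequality: for $\Ric_g\ge(n-1)g$ and any smooth domain $\Omega\subset M$, the normalized perimeter satisfies
\[
\frac{|\partial\Omega|_g}{V_g(M)}\ \ge\ \frac{|\partial B|}{|\Sph^n|},
\]
where $B\subset\Sph^n$ is a geodesic ball with $\overline V_{\Sph^n}(B)=\overline V_g(\Omega)$. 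In other words, the normalized isoperimetric profile of $M$ dominates that of the round sphere, $I_M\ge I_{\Sph^n}$.

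\emph{Coarea argument.} Set $\mu(t)=F_u(t)$. By Sard's theorem almost every $t$ is a regular value. The coarea formula gives
\[
-\mu'(t)\ \ge\ \int_{\{u=t\}}|\nabla u|^{-1}\,\frac{d\sigma}{V_g(M)}.
\]
Cauchy--Schwarz then gives
\[
\Big(\frac{|\{u=t\}|}{V_g(M)}\Big)^2\le\Big(\int_{\{u=t\}}|\nabla u|\,\frac{d\sigma}{V_g(M)}\Big)(-\mu'(t)).
\]
Integrating in $t$,
\[
\int_M|\nabla u|^2\,\dbarV g=\int_0^\infty\int_{\{u=t\}}|\nabla u|\,\frac{d\sigma}{V_g(M)}\,dt\ \ge\ \int_0^\infty\frac{I_M(\mu(t))^2}{-\mu'(t)}\,dt\ \ge\ \int_0^\infty\frac{I_{\Sph^n}(\mu(t))^2}{-\mu'(t)}\,dt.
\]
For $u^\star$ the level sets are geodesic spheres, $|\nabla u^\star|$ is constant on them, and the distribution function is the same $\mu$. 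So every inequality above is an equality for $u^\star$ on $\Sph^n$, and the last integral equals $\int_{\Sph^n}|\nabla u^\star|^2\dbarV{\Sph^n}$.

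\emph{Main obstacle.} The delicate part is the measure-theoretic bookkeeping. First, $\mu$ is only monotone, so its singular part and any plateaus of $u$ (where $\nabla u=0$ on a set of positive measure) must be handled. For these I would use $-\mu'\ge$ the absolutely continuous part, which only helps the inequality for $u$. For $u^\star$, one must check that $u^\star$ is locally absolutely continuous in the radial variable so that the equality chain holds. For this I would follow the standard Euclidean treatment (Brothers--Ziemer, or Talenti's argument) transplanted to the sphere with the L\'evy--Gromov profile.
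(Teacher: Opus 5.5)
Your argument is correct in outline, but it differs from the paper's treatment. The paper gives no proof of its own. It invokes the P\'olya--Szeg\H{o} inequality of Mondino--Semola for essentially nonbranching $\mathrm{CD}(K,N)$ spaces, with $K=n-1$, $N=n$, $p=2$, $\Omega=M$, after observing that $(M,g,\dbarV g)$ is such a space, and it mentions B\'erard--Meyer as an alternative.

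What you wrote is the smooth B\'erard--Meyer/Talenti scheme, which is also what Mondino--Semola run internally (L\'evy--Gromov plus coarea). Equimeasurability gives \eqref{eq:PS-pushforward}. Density, together with $L^2$-nonexpansiveness of $u\mapsto u^\star$, gives $u^\star\in H^1(\Sph^n)$. That nonexpansiveness does hold between $(M,\dbarV g)$ and $(\Sph^n,\dbarV{\Sph^n})$, because the superlevel sets of $u^\star$ are nested balls.

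The citation buys generality and settles every measure-theoretic issue at once. Your route is elementary and self-contained except for the step you defer: regularity of $u^\star$.

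That step closes directly in your setting, without Brothers--Ziemer, via the neighbourhood form of L\'evy--Gromov: $\overline V_g(\Omega_\delta)\ge\overline V_{\Sph^n}(B_{r+\delta})$ whenever $\overline V_g(\Omega)=\overline V_{\Sph^n}(B_r)$.
\begin{itemize}
\item Let $u$ be $L$-Lipschitz and $t<t'$. Then the $(t'-t)/L$-neighbourhood of $\{u>t'\}$ lies in $\{u>t\}$. So the radius of $\{u^\star>t\}$ exceeds that of $\{u^\star>t'\}$ by at least $(t'-t)/L$, i.e.\ $u^\star$ is $L$-Lipschitz.
\item Hence the coarea formula applies to $u^\star$.
\item The radial profile $\psi$ of $u^\star$ is Lipschitz in one variable, so $\psi(\{\psi'=0\})$ and the image of its non-differentiability set are Lebesgue-null. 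Therefore $-\mu'(t)=\int_{\{u^\star=t\}}|\nabla u^\star|^{-1}$ for a.e.\ $t$, where $\mu'$ is the pointwise a.e.\ derivative. Your equality chain for $u^\star$ then holds.
\end{itemize}

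In particular, plateaus are not a real obstacle once you use the pointwise a.e.\ derivative of $\mu$ on both sides. The singular part of $D\mu$ is simply discarded for $u$, where only ``$\ge$'' is needed. It does not enter the identity $\int_{\Sph^n}|\nabla u^\star|^2\,\dbarV{\Sph^n}=\int_0^\infty I_{\Sph^n}(\mu)^2/(-\mu')\,dt$ for $u^\star$.
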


Theorem \ref{thm:PolyaSzego} is a direct consequence of Mondino-Semola \cite{MondinoSemola}(See also \cite{BerardMeyer}). To be more specific, one could apply \cite[Definition~1.3, Theorem~1.4, equation (1.5), and Proposition~3.5]{MondinoSemola} with
$$K=n-1,\qquad N=n,\qquad p=2,\qquad\Omega=M,$$
and note that a smooth Riemannian manifold satisfying $\Ric_g\geq(n-1)g$, equipped with
normalized volume, is an essentially nonbranching $\mathrm{CD}(n-1,n)$ space.

\subsection{\bf Beckner's inequality}
The following is actually Beckner's sharp spherical log-Sobolev inequality, the proof could be found in \cite[Theorem~2]{Beckner} or \cite[Corollary 1.1]{DEKM}.
\begin{theorem}\label{thm:Beckner}
For every $v\in H^1(\Sph^n)$ with
$\int_{\Sph^n}v^2\,\dbarV {\Sph^n}=1$,
\begin{equation}\label{eq:Beckner}
\int_{\Sph^n}v^2\log v^2\,\dbarV {\Sph^n}
\leq\frac2n\int_{\Sph^n}|\nabla v|_{g_{\Sph^n}}^2\,\dbarV {\Sph^n}.
\end{equation}
The constant $2/n$ is sharp, and constants attain equality.
\end{theorem}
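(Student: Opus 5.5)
The plan is to obtain \eqref{eq:Beckner} as the endpoint $p\to2$ of the family of sharp spherical interpolation inequalities
\begin{equation*}
\frac{p-2}{n}\int_{\Sph^n}|\nabla v|^2_{g_{\Sph^n}}\,\dbarV{\Sph^n}+\int_{\Sph^n}v^2\,\dbarV{\Sph^n}\;\geq\;\Bigl(\int_{\Sph^n}|v|^p\,\dbarV{\Sph^n}\Bigr)^{2/p},\qquad 2<p\le \frac{2n}{n-2}.
\end{equation*}
By density it suffices to prove \eqref{eq:Beckner} for smooth positive $v$.

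\textbf{Step 1 (passing to the limit).} Fix such a $v$ with $\int v^2\,\dbarV{\Sph^n}=1$. Define $\varphi(p):=\bigl(\int v^p\,\dbarV{\Sph^n}\bigr)^{2/p}$, so that $\varphi(2)=1$. Differentiating $\log\varphi(p)=\frac2p\log\int v^p$ at $p=2$ gives
\begin{equation*}
\varphi'(2)=\int v^2\log v\,\dbarV{\Sph^n}=\tfrac12\int v^2\log v^2\,\dbarV{\Sph^n}.
\end{equation*}
The interpolation inequality reads $\frac{p-2}{n}\int|\nabla v|^2\ge\varphi(p)-\varphi(2)$. Dividing by $p-2>0$ and letting $p\downarrow2$ yields $\frac1n\int|\nabla v|^2\ge\frac12\int v^2\log v^2$, which is exactly \eqref{eq:Beckner}.

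\textbf{Step 2 (the interpolation inequalities), which I expect to be the main obstacle.} The crude Bakry--\'Emery argument with only $\Ric\ge(n-1)g$, i.e. the $\mathrm{CD}(n-1,\infty)$ condition, gives the constant $2/(n-1)$, which is not sharp. The dimensional term must therefore be used: on the sphere,
\begin{equation*}
\Gamma_2(f)\ge (n-1)|\nabla f|^2+\frac{(\Delta f)^2}{n}.
\end{equation*}
I would first try the rigidity/flow method of Bidaut-V\'eron--V\'eron and Dolbeault--Esteban--Kowalczyk--Loss, carried out in these steps:
\begin{itemize}
\item Write $v^p=\rho$ and consider the functional $\mathcal F(v)=\int|\nabla v|^2-\frac{n}{p-2}\bigl[(\int v^p)^{2/p}-\int v^2\bigr]$.
\item Evolve $v$ by the nonlinear diffusion flow that preserves $\int v^p$, namely the flow for the power $v^{\beta}$ with a suitably chosen exponent $\beta$.
\item Compute $\frac{d}{dt}\mathcal F$, using the integrated Bochner formula on $\Sph^n$:
\begin{equation*}
\int(\Delta u)^2=\int|\nabla^2u|^2+(n-1)\int|\nabla u|^2 .
\end{equation*}
\item Apply the trace-free decomposition $|\nabla^2u|^2=\|\nabla^2u-\frac{\Delta u}{n}g\|^2+\frac{(\Delta u)^2}{n}$.
\item Absorb the cross term $\int \nabla^2 u(\nabla u,\nabla u)/u$ by choosing $\beta$, to get $\frac{d}{dt}\mathcal F\le0$ for $p\le 2n/(n-2)$.
\end{itemize}
Since $\mathcal F\to0$ as the flow converges to constants, this gives $\mathcal F\ge0$. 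An alternative, used in \cite{DEKM}, is an elliptic rigidity argument applied to a minimizer. If neither closes cleanly, I would fall back on Beckner's original route: the Funk--Hecke formula combined with the sharp Hardy--Littlewood--Sobolev inequality on $\Sph^n$.

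\textbf{Step 3 (sharpness and equality).} Constants give equality in \eqref{eq:Beckner}, since both sides vanish. For sharpness, take $v=(1+\eps\phi)/\|1+\eps\phi\|_{L^2}$ with $\phi$ a first spherical harmonic, so that $-\Delta\phi=n\phi$ and $\int\phi=0$. Expanding to second order, both sides equal $2\eps^2\int\phi^2+O(\eps^3)$. Hence no constant smaller than $2/n$ can work.
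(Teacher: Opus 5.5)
Your proposal is correct and is essentially the argument behind the paper's citation. The paper does not prove \eqref{eq:Beckner} itself but refers to Beckner and to \cite{DEKM}, whose Corollary~1.1 is obtained exactly as the $p\downarrow 2$ limit of the interpolation family in your Step~1, and your second-order expansion along a first spherical harmonic correctly confirms that $2/n$ is sharp.

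Since that limit only uses $p$ slightly above $2$, Step~2 can simply be cited as the paper does, or lightened. The linear heat flow $\partial_t(v^p)=\Delta(v^p)$ together with the trace-free Bochner term already gives the interpolation inequality for $2<p\le\frac{2n^2+1}{(n-1)^2}$. Run at $p=2$ with $\int v^2\log v^2$ in place of the $L^p$ norm, the same computation yields \eqref{eq:Beckner} directly, so the nonlinear flow and the choice of $\beta$ are unnecessary here.
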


\subsection{\bf Ma-Wang's theorem}
The following theorem is due to Ma-Wang \cite[Theorem~1.1]{MaWang}.

\begin{theorem}\label{thm:MaWang} 
Let $(M^n,g)$ be a connected, closed, smooth Riemannain manifold.
Set
\begin{equation}\label{eq:rho-def}
\rho_g(x):=\max\{0,(n-1)-\lambda_{\min}^g(\Ric_g)(x)\},
\end{equation}
where $\lambda_{\min}^g(\Ric_g)$ denotes  the smallest eigenvalue of the Ricci
tensor.  There exists a constant $\delta_{\MW}(n,p)>0$ such that  if 
\begin{equation}\label{eq:MW-hypothesis}
V_g(M)=|\Sph^n|,
\qquad
\int_M\rho_g^p\,dV_g<\delta_{\MW}(n,p)
\end{equation}
for some positive constant $p>\frac{n}{2}$, 
then the volume-normalized Ricci flow starting at $g$ exists for all forward time
and converges exponentially  to a round metric $g_\infty$
of constant sectional curvature one.
\end{theorem}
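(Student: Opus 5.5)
Since this is Ma–Wang's theorem \cite[Theorem~1.1]{MaWang}, quoted from the literature, the plan below is only how I would reconstruct it. I have not checked that the pieces fit. The idea is to use the two hypotheses in \eqref{eq:MW-hypothesis} to show that $g$ is, in a weak sense, very close to the round sphere. Then run the flow for a short time to upgrade this closeness to smooth closeness. Finally, invoke the known stability of the round metric under the volume-normalized Ricci flow.

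\textbf{Step 1 (almost rigidity from volume).} By Petersen–Wei's integral-curvature Bishop–Gromov comparison, smallness of $\int_M\rho_g^p\,dV_g$ with $p>n/2$ gives a volume comparison that is almost as sharp as the classical one. It also gives a diameter bound (integral Myers) and a uniform Sobolev inequality (Gallot's isoperimetric estimate). Combined with $V_g(M)=|\Sph^n|$, the almost-equality case of Bishop's inequality applies in its integral version (Colding's volume rigidity, extended to integral Ricci bounds by Petersen–Wei). This should force $(M,g)$ to be Gromov–Hausdorff close to $(\Sph^n,g_{\Sph^n})$, with $d_{GH}\to0$ as $\delta_{\MW}\to0$.

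\textbf{Step 2 (smoothing by the flow).} Next I would run the Ricci flow on a short interval $[0,T_0]$, with $T_0$ independent of $g$. The uniform Sobolev constant and the $L^p$ control of the negative part of $\Ric-(n-1)g$, for $p>n/2$, should feed a Moser iteration for the evolution of $\rho$. That evolution is a subsolution of a heat-type inequality with reaction term of size $|\mathrm{Rm}|\rho$. The aim is to obtain $\sup_M\rho_{g(t)}\le C t^{-n/(2p)}\|\rho_g\|_{L^p}^{\text{power}}$ together with curvature bounds $|\mathrm{Rm}_{g(t)}|\le C/t$. I expect this to be the main obstacle. Only a one-sided, integral bound on Ricci is available, so I would first try a pseudolocality-type argument (Perelman, or Tian–Wang's version for almost-Euclidean volume) to get curvature bounds. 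Almost maximal volume of small balls, which follows from Step 1, should supply the input. Once $|\mathrm{Rm}|\le C/t$ is known, the Moser iteration for $\rho$ becomes standard.

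\textbf{Step 3 (stability).} At time $T_0$ the metric has bounded curvature and is Gromov–Hausdorff close to the sphere. Shi's estimates and Cheeger–Gromov compactness then give closeness in $C^{k,\alpha}$ to a round metric, up to diffeomorphism. The volume-normalized Ricci flow started at a metric $C^{2,\alpha}$-close to round exists for all time and converges exponentially. One route is linear stability, since the Lichnerowicz operator on the round sphere has a spectral gap modulo diffeomorphisms. Another is Huisken's pinching theorem, because $C^2$-closeness gives the required curvature pinching. Because the flow is volume-normalized, the limit $g_\infty$ has volume $|\Sph^n|$ and hence constant sectional curvature one. Finally, I would choose $\delta_{\MW}(n,p)$ small enough that each closeness threshold in Steps 1–3 is met.
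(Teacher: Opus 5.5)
The paper gives no proof of this statement. It is quoted from \cite[Theorem~1.1]{MaWang} and used as a black box, so your outline has to stand on its own. Your Step~1 and the stability part of Step~3 are plausible. Step~2, which you rightly flag as the crux, fails as written for two concrete reasons.

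\textbf{Pseudolocality does not apply.} Perelman's theorem needs $R_g\ge -r_0^{-2}$ pointwise on $B(x_0,r_0)$ plus an almost-Euclidean isoperimetric inequality there. The Tian--Wang version needs $\Ric_g\ge-(n-1)\delta r_0^{-2}g$ pointwise plus almost-Euclidean volume. Hypothesis \eqref{eq:MW-hypothesis} gives no pointwise lower bound on $R_g$ or $\Ric_g$ at all, only $\|R_g^-\|_{L^p}\le n\|\rho_g\|_{L^p}$. You would need a pseudolocality theorem under integral curvature hypotheses. One might try Perelman's local entropy, where the term $\tau\int R_gv^2$ can plausibly be absorbed using $p>n/2$. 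That is precisely the hard part, and the outline does not supply it.

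\textbf{The Moser iteration premise is false.} In an Uhlenbeck frame, $(\partial_t-\Delta)R_{ab}=2R_{acbd}R_{cd}$. Take a unit eigenvector $e_1$ for the smallest Ricci eigenvalue $\lambda_1$, with eigenbasis $e_k$ and eigenvalues $\lambda_k$. The reaction term there is
\[
2\lambda_1^2+2\sum_k K(e_1,e_k)\,(\lambda_k-\lambda_1).
\]
The sum couples $\rho$ to the large eigenvalues through possibly negative sectional curvatures. It is bounded by $C|\mathrm{Rm}|\,|\Ric-(n-1)g|$, not by $C|\mathrm{Rm}|\rho$, and it need not vanish where $\rho=0$. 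This is why lower Ricci bounds are in general not preserved by Ricci flow when $n\ge4$. Under $|\mathrm{Rm}|\le C/t$ this term is $O(t^{-2})$, which is not integrable at $t=0$. So no $L^p\to L^\infty$ smoothing of $\rho$ follows.

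This also leaves Step~3 without input. With $|\mathrm{Rm}|\le\alpha/t$, Perelman's distance-distortion lemma only keeps distances from shrinking: $d_t\ge d_0-C\sqrt{\alpha t}$. Controlling expansion in the usual way needs a lower Ricci bound along the flow, which is exactly what the previous point fails to provide. So nothing in your outline shows that $g(T_0)$ is Gromov--Hausdorff close to $\Sph^n$.

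Note finally that the paper only applies the theorem to $h=cg$, where $\rho_h\le(n-1)\eps/(1+\eps)$ holds pointwise. For that weaker statement the first obstacle disappears. Since $\Ric_h\ge0$ and $V_h(M)=|\Sph^n|$, Bishop--Gromov gives almost-Euclidean small balls, so Tian--Wang applies. The second obstacle remains.
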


\section{Key Lemmas}\label{keylemma}\label{sec:keylemma}
As an application of Theorem \ref{thm:PolyaSzego}, we have the following entropy comparison result.
\begin{proposition}\label{prop:entropy-comparison}
Let $(M^n, g)$ be a connected, closed, smooth Riemannian manifold. Assume that \eqref{Kwong's-condition} holds.  Then, for every
$\tau>0$,
\begin{equation}\label{eq:mu-comparison}
\mu(g,\tau)\geq\mu(g_{\Sph^n},\tau)+\log\frac{V_g(M)}{|\Sph^n|}+n(n-1)\eps\tau.
\end{equation}
Consequently,
\begin{equation}\label{eq:nu-comparison-pre}
\nu(g)\geq\log\frac {V_g(M)}{|\Sph^n|}
+\inf_{\tau>0}\{\mu(g_{\Sph^n},\tau)+n(n-1)\eps\tau\}.
\end{equation}
\end{proposition}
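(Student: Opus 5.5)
The plan is to compare $\W_g(v,\tau)$ term by term with $\W_{g_{\Sph^n}}(v^\star,\tau)$, where $v^\star$ is the spherical decreasing rearrangement of $v$. Under \eqref{Kwong's-condition} we have $\Ric_g\geq(n-1)g$, so Theorem \ref{thm:PolyaSzego} applies.

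Fix $\tau>0$ and take any nonnegative $v\in C^\infty(M)$ with $\int_Mv^2\,\dbarV g=1$.

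\textbf{The gradient term.} By \eqref{eq:PS-energy},
\[
4\tau\int_M|\nabla v|_g^2\,\dbarV g\ge 4\tau\int_{\Sph^n}|\nabla v^\star|^2\,\dbarV{\Sph^n}.
\]

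\textbf{The normalization and entropy terms.} Apply \eqref{eq:PS-pushforward} with $F(s)=s^2$ and with $F(s)=s^2\log s^2$ (setting $F(0)=0$). This gives $\int_{\Sph^n}(v^\star)^2\,\dbarV{\Sph^n}=1$ and
\[
\int_Mv^2\log v^2\,\dbarV g=\int_{\Sph^n}(v^\star)^2\log (v^\star)^2\,\dbarV{\Sph^n}.
\]
For the integrability needed in \eqref{eq:PS-pushforward}: $v$ is smooth on a closed manifold, hence bounded, and $s^2\log s^2$ is bounded on bounded sets. So $v^\star$ is bounded as well.

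\textbf{The curvature term.} Since $R_g\ge n(n-1)(1+\eps)$ and $\int_M v^2\,\dbarV g=1$,
\[
\tau\int_MR_gv^2\,\dbarV g\ge n(n-1)\tau+n(n-1)\eps\tau .
\]
On the sphere, $R_{g_{\Sph^n}}\equiv n(n-1)$, so $n(n-1)\tau=\tau\int_{\Sph^n} R_{g_{\Sph^n}}(v^\star)^2\,\dbarV{\Sph^n}$.

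\textbf{The volume term.} We have $\log V_g(M)=\log|\Sph^n|+\log\frac{V_g(M)}{|\Sph^n|}$.

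\textbf{Assembling.} Adding these four comparisons gives
\[
\W_g(v,\tau)\ge \W_{g_{\Sph^n}}(v^\star,\tau)+\log\frac{V_g(M)}{|\Sph^n|}+n(n-1)\eps\tau .
\]
The rearrangement $v^\star$ is a nonnegative, normalized $H^1(\Sph^n)$ function, not necessarily smooth. By the $H^1$ characterization \eqref{eq:H1-mu} on the sphere, $\W_{g_{\Sph^n}}(v^\star,\tau)\ge\mu(g_{\Sph^n},\tau)$. Taking the infimum over $v$ yields \eqref{eq:mu-comparison}. Then taking the infimum over $\tau>0$ on both sides gives \eqref{eq:nu-comparison-pre}, since the $\log$ volume term does not depend on $\tau$.

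\textbf{Main obstacle.} The only delicate points are technical. First, Theorem \ref{thm:PolyaSzego} must be applied with normalized measures, which is exactly how it is stated. Second, we need $v^\star\in H^1(\Sph^n)$ together with the validity of \eqref{eq:H1-mu} on the sphere. Both are already provided by the statements quoted above, so no new analysis should be needed.
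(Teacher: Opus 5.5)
Your proposal is correct and follows the paper's proof essentially step for step: you rearrange $v$ to $v^\star$, use Theorem~\ref{thm:PolyaSzego} for the Dirichlet, normalization and entropy terms, use the pointwise bound $R_g\ge n(n-1)(1+\eps)$ for the curvature term, and use the $H^1$ characterization \eqref{eq:H1-mu} on the sphere before taking the infimum over $v$ and then over $\tau$. The only differences are cosmetic: you test with smooth $v$, which matches \eqref{eq:mu-nu-definitions} directly, and you check the integrability of $s^2\log s^2$ explicitly, whereas the paper allows any nonnegative normalized $v\in H^1(M)$.
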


\begin{proof}
Let $v\in H^1(M)$ be nonnegative and normalized as in
\eqref{eq:L2-normalization}. Let $v^\star$ be its spherical decreasing rearrangement. By Theorem~\ref{thm:PolyaSzego}, the Dirichlet energy $v^\star$ is no larger, and
\begin{equation}\label{eq:entropy-equimeasurable-v}
\int_Mv^2\,\dbarV g
=\int_{\Sph^n}(v^\star)^2\,\dbarV {\Sph^n},
\end{equation}
and
\begin{equation}\label{eq:entropy-equimeasurable}
\int_Mv^2\log v^2\,\dbarV g
=\int_{\Sph^n}(v^\star)^2\log(v^\star)^2\,\dbarV {\Sph^n}.
\end{equation}
The scalar curvature lower bound \eqref{Kwong's-condition} gives
\begin{equation}\label{eq:scalar-lower-test}
\int_MR_gv^2\,\dbarV g\geq n(n-1)(1+\eps).
\end{equation}
Substitution in \eqref{eq:W-probability} yields
$$
\W_g(v,\tau)\geq \W_{g_{\Sph^n}}(v^\star,\tau)
+\log\frac{V_g(M)}{|\Sph^n|}+n(n-1)\eps\tau.
$$
By \eqref{eq:H1-mu}, $v^\star$ is an admissible test function, so
$\W_{g_{\Sph^n}}(v^\star,\tau)\geq\mu(g_{\Sph^n},\tau)$. Taking the
infimum over $v$, we have
\eqref{eq:mu-comparison}. Finally taking the infimum over $\tau$ in \eqref{eq:mu-comparison},  we obtain \eqref{eq:nu-comparison-pre}.
\end{proof}

We define
\begin{equation}\label{eq:Cn}
C_n(\tau):=n(n-1)\tau+\log |\Sph^n|-\frac n2\log(4\pi\tau)-n.
\end{equation}
Actually, $C_n(\tau)=\W_{g_{\Sph^n}}(1,\tau)$. The following comparison statements for the round sphere are useful. 
\begin{lemma}\label{lem:sphere-mu}
The following statements hold.
\begin{itemize}
\item[(1)] For every $\tau\geq\frac{1}{2n}$,
\begin{equation}\label{eq:sphere-large}
\mu(g_{\Sph^n},\tau)=C_n(\tau).
\end{equation}
\item[(2)]For every $0<\tau\leq\frac{1}{2n}$,
\begin{equation}\label{eq:sphere-small}
\mu(g_{\Sph^n},\tau)\geq C_n\left(\frac{1}{2n}\right).
\end{equation}
\item[(3)]\begin{equation}\label{eq:sphere-nu}
\nu(g_{\Sph^n})=C_n\left(\frac{1}{2(n-1)}\right).
\end{equation}
\end{itemize}
\end{lemma}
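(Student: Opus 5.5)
The plan is to apply Beckner's inequality (Theorem~\ref{thm:Beckner}) directly to the entropy on the round sphere. On $(\Sph^n,g_{\Sph^n})$ we have $R=n(n-1)$ and $V=|\Sph^n|$. So for any admissible $v$,
\begin{equation*}
\W_{g_{\Sph^n}}(v,\tau)=C_n(\tau)+4\tau\int_{\Sph^n}|\nabla v|^2\,\dbarV{\Sph^n}-\int_{\Sph^n}v^2\log v^2\,\dbarV{\Sph^n}.
\end{equation*}
Beckner gives $\int v^2\log v^2\leq\frac2n\int|\nabla v|^2$. Hence
\begin{equation*}
\W_{g_{\Sph^n}}(v,\tau)\geq C_n(\tau)+\Bigl(4\tau-\frac2n\Bigr)\int_{\Sph^n}|\nabla v|^2\,\dbarV{\Sph^n}.
\end{equation*}
If $\tau\geq\frac1{2n}$, the extra term is nonnegative, so $\W\geq C_n(\tau)$. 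The choice $v\equiv1$ gives equality, which proves (1).

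For (2), take $0<\tau\leq\frac1{2n}$ and set $D=\int|\nabla v|^2\geq0$. The idea is to write $\W_{g_{\Sph^n}}(v,\tau)$ as the value at $\tau'=\frac1{2n}$ plus a correction. Since $\tau$ enters $\W$ through the terms $4\tau D+\tau n(n-1)-\frac n2\log(4\pi\tau)$, we get
\begin{equation*}
\W(v,\tau)-\W\Bigl(v,\tfrac1{2n}\Bigr)=\Bigl(\tau-\tfrac1{2n}\Bigr)\bigl(4D+n(n-1)\bigr)-\frac n2\log(2n\tau).
\end{equation*}
Part (1) already gives $\W(v,\frac1{2n})\geq C_n(\frac1{2n})$. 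So it suffices to show the right-hand side is $\geq0$.

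This step is the main obstacle, because $D$ is unbounded and the coefficient $(\tau-\frac1{2n})$ is negative. The estimate above is therefore too crude, and I would redo it using Beckner for a sharper bound. Write the difference as $\W(v,\tau)-C_n(\tau')$ with $\tau'=\frac1{2n}$. Beckner gives $\W(v,\tau)\geq C_n(\tau)+(4\tau-\frac2n)D$, which still leaves a negative multiple of $D$. So one must also use the log-Sobolev inequality at small scales, namely the Euclidean-type inequality $\int v^2\log v^2\leq\frac n2\log\bigl(1+\frac{4}{n}\kappa\int|\nabla v|^2\bigr)$ (the entropy form of Beckner's, or Sobolev). Interpolating, the function $\tau\mapsto\mu(g_{\Sph^n},\tau)$ can be bounded via the concave logarithmic form: one minimizes over $D$ the expression $4\tau D-\frac n2\log(1+\frac{4D}{n(n-1)}\cdot c)+\dots$ and checks that the result is $\geq C_n(\frac1{2n})$.

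A cleaner route I would try first is that $\tau\mapsto C_n(\tau)$ is decreasing on $(0,\frac1{2(n-1)}]$. Indeed $C_n'(\tau)=n(n-1)-\frac n{2\tau}<0$ there, and $\frac1{2n}<\frac1{2(n-1)}$. Combining this with the sharp log-Sobolev inequality in its dimensional (logarithmic) form on $\Sph^n$ should give $\mu(g_{\Sph^n},\tau)\geq C_n(\frac1{2n})$ for small $\tau$.

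For (3), compute $\inf_{\tau>0}\mu(g_{\Sph^n},\tau)$. On $\tau\geq\frac1{2n}$ this equals $\inf C_n(\tau)$. The derivative $C_n'(\tau)=n(n-1)-\frac n{2\tau}$ vanishes at $\tau=\frac1{2(n-1)}>\frac1{2n}$, so the infimum is $C_n(\frac1{2(n-1)})$. On $\tau\leq\frac1{2n}$, part (2) gives values $\geq C_n(\frac1{2n})\geq C_n(\frac1{2(n-1)})$. Therefore $\nu(g_{\Sph^n})=C_n\bigl(\frac1{2(n-1)}\bigr)$.
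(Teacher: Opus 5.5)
Your parts (1) and (3) are correct and match the paper: Beckner's inequality handles $\tau\ge\frac1{2n}$, with $v\equiv1$ attaining equality, and (3) follows from convexity of $C_n$, whose minimum is at $\frac1{2(n-1)}>\frac1{2n}$, together with (2). The gap is part (2). You never establish it, and neither route you sketch can. That $C_n$ decreases on $(0,\frac1{2(n-1)}]$ is irrelevant. For $\tau<\frac1{2n}$ the constant is no longer a minimizer: perturbing $1$ by a first spherical harmonic makes $4\tau D-\int v^2\log v^2\,\dbarV{\Sph^n}<0$, so $\mu(g_{\Sph^n},\tau)<C_n(\tau)$ there.

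The dimensional route also fails. Suppose $\int v^2\log v^2\,\dbarV{\Sph^n}\le\frac n2\log(1+aD)$, and minimize $4\tau D-\frac n2\log(1+aD)$ over $D\ge0$. At $\tau=\frac1{2n}$ this gives the lower bound $C_n(\frac1{2n})+\frac n2(1-x+\log x)$ with $x=\frac4{n^2a}$. That is strictly below $C_n(\frac1{2n})$ whenever $a>\frac4{n^2}$, and $a<\frac4{n^2}$ would contradict the sharpness of Beckner's constant. So you need exactly $a=\frac4{n^2}$ (your $\kappa=\frac1n$). Your minimization would then give $\mu(g_{\Sph^n},\tau)\ge C_n(\frac1{2n})+\frac12-n\tau$, which is exactly what you want.

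But that inequality is false on $\Sph^n$. Since $\mu(g,\tau)\to0$ as $\tau\to0$ (equivalently, test it on bumps concentrating at a point), it would force $\log|\Sph^n|+\frac n2\log\frac n{2\pi e}\le0$. This quantity is in fact positive, about $0.37$ for $n=3$. The valid inequality from the sharp Sobolev inequality and Jensen has $a=\frac4{n(n-2)}>\frac4{n^2}$, so it is already too weak at $\tau=\frac1{2n}$. Taking the minimum with Beckner's linear bound does not rescue any valid $a$ in this family either: near $\tau=\frac1{2n}$ you would need $a\le\frac4n(e^{1/n}-1)$, which is smaller than what concentrating bumps force.

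The missing idea is monotonicity of $\tau\mapsto\mu(g_{\Sph^n},\tau)$ itself on $(0,\frac1{2n}]$. The paper obtains this dynamically from Perelman's entropy, not from a static functional inequality. Under the unnormalized Ricci flow the round sphere evolves as $g_{\Sph^n}(t)=(1-2(n-1)t)g_{\Sph^n}$. For $0<\tau_1<\tau_2\le\frac1{2n}$, take $t=\frac{\tau_2-\tau_1}{1-2(n-1)\tau_1}\in(0,\tau_2)$, so that $\frac{\tau_2-t}{1-2(n-1)t}=\tau_1$. Perelman's monotonicity (Proposition~\ref{prop:invariance}(2)) and scale invariance then give $\mu(g_{\Sph^n},\tau_2)\le\mu(g_{\Sph^n}(t),\tau_2-t)=\mu(g_{\Sph^n},\tau_1)$. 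Hence $\mu(g_{\Sph^n},\tau)\ge\mu(g_{\Sph^n},\frac1{2n})=C_n(\frac1{2n})$ by part (1), which is (2). With this step replacing your sketch, your treatment of (1) and (3) completes the proof.
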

\begin{proof}
For a nonnegative function $v\in H^1(\Sph^n)$ with $\int_{\Sph^n}v^2\dbarV {\Sph^n}=1$, we have
\begin{equation}\label{eq:W-minus-C}
\W_{g_{\Sph^n}}(v,\tau)-C_n(\tau)
=4\tau\int_{\Sph^n}|\nabla v|^2\,\dbarV {\Sph^n}
-\int_{\Sph^n}v^2\log v^2\,\dbarV {\Sph^n}.
\end{equation}
If $\tau\geq\frac{1}{2n}$, then by Theorem \ref{thm:Beckner} we know the right side of \eqref{eq:W-minus-C} is
nonnegative, and equality holds for $v\equiv1$. This proves
\eqref{eq:sphere-large}.

For the smaller scales $0<\tau\leq\frac{1}{2n}$, the unnormalized Ricci flow from the unit sphere is
\begin{equation}\label{eq:round-flow}
g_{\Sph^n}(t)=(1-2(n-1)t)g_{\Sph^n},
\qquad0\leq t<\frac{1}{2(n-1)}.
\end{equation}
Fix $0<\tau_1<\tau_2\leq\frac{1}{2n}$ and set
\begin{equation}\label{eq:t-choice}
t:=\frac{\tau_2-\tau_1}{1-2(n-1)\tau_1}.
\end{equation}
Because $2(n-1)\tau_2\le(n-1)/n<1$, one has $0<t<\tau_2$. Direct calculation gives
\begin{equation}\label{eq:t-algebra3}
\frac{\tau_2-t}{1-2(n-1)t}=\tau_1.
\end{equation}
Therefore, by Proposition \ref{prop:invariance}, we have
\begin{align*}
\mu(g_{\Sph^n},\tau_2)
&\leq\mu(g_{\Sph^n}(t),\tau_2-t)\\
&=\mu\left(g_{\Sph^n},
\frac{\tau_2-t}{1-2(n-1)t}\right)
=\mu(g_{\Sph^n},\tau_1).
\end{align*}
Here, we note the first inequality comes from Proposition \ref{prop:invariance}(2), and the equality of the second line comes from Proposition \ref{prop:invariance}(1).
Thus $\mu(\Sph^n,\cdot)$ is nonincreasing on $(0,\frac{1}{2n}]$, and
\eqref{eq:sphere-small} follows from \eqref{eq:sphere-large} at $\frac{1}{2n}$.

Finally,
\begin{equation}\label{eq:C-derivatives}
C_n'(\tau)=n(n-1)-\frac n{2\tau},
\qquad
C_n''(\tau)=\frac n{2\tau^2}>0.
\end{equation}
Hence $C_n(\tau)$ has its unique minimum at $\frac{1}{2(n-1)}$. Since
$\frac{1}{2(n-1)}>\frac{1}{2n}$, the large-scale formula \eqref{eq:sphere-large} gives
\begin{equation}\label{eq:Cntau}
    \mu\left(g_{\Sph^n},\frac{1}{2(n-1)}\right)=C_n\left(\frac{1}{2(n-1)}\right).
\end{equation} 
Notice that 
$$C_n\left(\frac{1}{2n}\right)-C_n\left(\frac{1}{2(n-1)}\right)=\frac n2\log\frac n{n-1}-\frac12>0.$$
Then, we obtain \eqref{eq:sphere-nu}.

Thus, we finish the proof.
\end{proof}
For brevity, set 
\begin{equation}\label{eq:gamma}
\gamma_n:=C_n\left(\frac{1}{2n}\right)-C_n\left(\frac{1}{2(n-1)}\right).
\end{equation}
A direct computation yields that 
\begin{equation}\label{eq:gamma+}\gamma_n=\frac n2\log\frac n{n-1}-\frac12>0.\end{equation}
Define \begin{equation}\label{eq:eps-ent}
\eps_{\ent}(n)
=\exp\left(\frac{2\gamma_n}{n}\right)-1
=\frac n{n-1}e^{-1/n}-1.
\end{equation}
One can easily check  
 \begin{equation}\label{eq:eps-ent-upper}
0<\eps_{\ent}(n)<\frac{1}{n-1}.\end{equation}
Define 
\begin{equation}\label{eq:Phi}
\Phi_n(\eps):=\inf_{\tau>0}
\{\mu(g_{\Sph^n},\tau)+n(n-1)\eps\tau\}.
\end{equation}
We obtain the precise value of such infimum.
\begin{proposition}
\label{prop:tilted-minimum}
If $0<\eps\leq\eps_{\ent}(n)$, then
\begin{equation}\label{eq:Phi-value}
\Phi_n(\eps)=\nu(g_{\Sph^n})+\frac n2\log(1+\eps).
\end{equation}
The unique minimizing scale is
\begin{equation}\label{eq:tau-eps}
\tau_\eps:=\frac1{2(n-1)(1+\eps)}.
\end{equation}
\end{proposition}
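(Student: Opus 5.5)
Split the infimum defining $\Phi_n(\eps)$ into the two ranges $\tau\geq\frac1{2n}$ and $0<\tau\leq\frac1{2n}$, and use Lemma~\ref{lem:sphere-mu} on each. On $[\frac1{2n},\infty)$, Lemma~\ref{lem:sphere-mu}(1) gives $\mu(g_{\Sph^n},\tau)+n(n-1)\eps\tau=C_n(\tau)+n(n-1)\eps\tau=:G_\eps(\tau)$. Directly,
\[
G_\eps(\tau)=n(n-1)(1+\eps)\tau+\log|\Sph^n|-\frac n2\log(4\pi\tau)-n,
\qquad G_\eps''(\tau)=\frac{n}{2\tau^2}>0 .
\]
So $G_\eps$ is strictly convex with unique critical point at $n(n-1)(1+\eps)=\frac{n}{2\tau}$, i.e.\ at $\tau_\eps$. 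Since $\eps\leq\eps_{\ent}(n)<\frac1{n-1}$ by \eqref{eq:eps-ent-upper}, we have $(n-1)(1+\eps)<n$, hence $\tau_\eps>\frac1{2n}$ and $\tau_\eps$ is an interior point of this range. Evaluating,
\[
G_\eps(\tau_\eps)=\frac n2+\log|\Sph^n|-\frac n2\log\frac{4\pi}{2(n-1)}+\frac n2\log(1+\eps)-n=C_n\Big(\frac1{2(n-1)}\Big)+\frac n2\log(1+\eps),
\]
which by Lemma~\ref{lem:sphere-mu}(3) equals $\nu(g_{\Sph^n})+\frac n2\log(1+\eps)$. So on $[\frac1{2n},\infty)$ the infimum is this value, attained only at $\tau_\eps$.

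The main step is to show that small scales do no better, with strict inequality. For $0<\tau<\frac1{2n}$, Lemma~\ref{lem:sphere-mu}(2) and $\eps\tau>0$ give
\[
\mu(g_{\Sph^n},\tau)+n(n-1)\eps\tau> C_n\Big(\frac1{2n}\Big)=\nu(g_{\Sph^n})+\gamma_n .
\]
It therefore suffices to have $\gamma_n\geq\frac n2\log(1+\eps)$, i.e.\ $1+\eps\leq e^{2\gamma_n/n}=1+\eps_{\ent}(n)$. This is exactly the hypothesis $\eps\leq\eps_{\ent}(n)$, and explains the definition \eqref{eq:eps-ent}. The strict inequality makes the small-$\tau$ range strictly worse even when $\eps=\eps_{\ent}(n)$. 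At $\tau=\frac1{2n}$ itself, the value is $G_\eps(\frac1{2n})>G_\eps(\tau_\eps)$ by strict convexity, since $\tau_\eps\neq\frac1{2n}$.

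Combining the two ranges gives \eqref{eq:Phi-value}, and $\tau_\eps$ is the unique minimizer. The only delicate points are checking $\tau_\eps>\frac1{2n}$ via \eqref{eq:eps-ent-upper} and keeping the comparison on $(0,\frac1{2n})$ strict through the positive term $n(n-1)\eps\tau$.
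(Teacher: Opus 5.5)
Your proposal is correct and follows essentially the same route as the paper: strict convexity of $C_n(\tau)+n(n-1)\eps\tau$ on $[\frac1{2n},\infty)$ with interior critical point $\tau_\eps>\frac1{2n}$ by \eqref{eq:eps-ent-upper}, and a strict bound on $(0,\frac1{2n}]$ from Lemma~\ref{lem:sphere-mu}(2), the positive term $n(n-1)\eps\tau$, and the equivalence $\frac n2\log(1+\eps)\leq\gamma_n\iff\eps\leq\eps_{\ent}(n)$. The only cosmetic difference is that you handle the endpoint $\tau=\frac1{2n}$ by strict convexity, whereas the paper includes it in the small-scale branch; both are fine.
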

\begin{proof}
By \eqref{eq:eps-ent-upper}, $\tau_\eps>\frac{1}{2n}$. For $\tau\geq\frac{1}{2n}$,
Lemma~\ref{lem:sphere-mu} gives
\begin{equation}\label{eq:large-tilted}
\mu(g_{\Sph^n},\tau)+n(n-1)\eps\tau
=n(n-1)(1+\eps)\tau+\log |\Sph^n|
-\frac n2\log(4\pi\tau)-n.
\end{equation}
The second derivative of the right hand side of \eqref{eq:large-tilted} with respect to $\tau$ is $n/(2\tau^2)>0$, and its unique critical point is
$\tau_\eps$. Taking $\tau=\tau_\eps$ in \eqref{eq:large-tilted}, by \eqref{eq:sphere-nu} and \eqref{eq:Cn},  we have
\begin{equation}\label{eq:large-tilted-value}
\mu(g_{\Sph^n},\tau_\eps)+n(n-1)\eps\tau_\eps
=\nu(g_{\Sph^n})+\frac n2\log(1+\eps).
\end{equation}

For $0<\tau\leq\frac{1}{2n}$, Lemma~\ref{lem:sphere-mu} gives the strict estimate
\begin{equation}\label{eq:small-tilted-strict}
\mu(g_{\Sph^n},\tau)+n(n-1)\eps\tau
\geq C_n\left(\frac{1}{2n}\right)+n(n-1)\eps\tau>C_n\left(\frac{1}{2n}\right).
\end{equation}
On the other hand, by \eqref{eq:gamma} and \eqref{eq:gamma+} we have
\begin{align*}
\nu(g_{\Sph^n})+\frac n2\log(1+\eps)
&\leq\nu(g_{\Sph^n})+\frac n2\log(1+\eps_{\ent}(n))\\
&=\nu(g_{\Sph^n})+\gamma_n=C_n\left(\frac{1}{2n}\right).
\end{align*}
Thus every point of the small-scale branch is strictly larger than the value at
$\tau_\eps$, including when $\eps=\eps_{\ent}(n)$. This proves both
\eqref{eq:Phi-value} and uniqueness.
\end{proof}

\begin{corollary}
\label{cor:entropy-lower}
If $0<\eps\leq\eps_{\ent}(n)$ and \eqref{Kwong's-condition} holds, then
\begin{equation}\label{eq:entropy-sharp-factor}
\nu(g)\geq\nu(g_{\Sph^n})+\log\frac{V_g(M)}{|\Sph^n|}
+\frac n2\log(1+\eps).
\end{equation}
In particular, if
\begin{equation}\label{eq:volume-violation-entropy-0}
V_g(M)>|\Sph^n|(1+\eps)^{-n/2},
\end{equation}
then,
\begin{equation}\label{eq:volume-violation-entropy}
\nu(g)>\nu(g_{\Sph^n}).
\end{equation}
\end{corollary}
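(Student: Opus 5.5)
This follows by combining Proposition~\ref{prop:entropy-comparison} with Proposition~\ref{prop:tilted-minimum}; no new estimate is needed. Under \eqref{Kwong's-condition}, the inequality \eqref{eq:nu-comparison-pre} gives
\[
\nu(g)\geq\log\frac{V_g(M)}{|\Sph^n|}+\inf_{\tau>0}\{\mu(g_{\Sph^n},\tau)+n(n-1)\eps\tau\}
=\log\frac{V_g(M)}{|\Sph^n|}+\Phi_n(\eps),
\]
using the definition \eqref{eq:Phi}. Since $0<\eps\leq\eps_{\ent}(n)$, Proposition~\ref{prop:tilted-minimum} supplies the exact value
\[
\Phi_n(\eps)=\nu(g_{\Sph^n})+\frac n2\log(1+\eps).
\]
Substituting this yields \eqref{eq:entropy-sharp-factor}.

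The only point to check is that the closedness hypothesis implicit in Proposition~\ref{prop:entropy-comparison} is available. Under \eqref{Kwong's-condition} we have $\Ric_g\geq(n-1)g$, so Bonnet--Myers shows that a complete $M$ is compact. Hence $M$ is closed, and all quantities involved are finite.

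For the ``in particular'' part, assume \eqref{eq:volume-violation-entropy-0}. Taking logarithms gives
\[
\log\frac{V_g(M)}{|\Sph^n|}>-\frac n2\log(1+\eps),
\]
that is,
\[
\log\frac{V_g(M)}{|\Sph^n|}+\frac n2\log(1+\eps)>0.
\]
Inserting this into \eqref{eq:entropy-sharp-factor} gives the strict inequality $\nu(g)>\nu(g_{\Sph^n})$, which is \eqref{eq:volume-violation-entropy}.

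No step is a real obstacle; the substantive work was already done in Proposition~\ref{prop:tilted-minimum}, which identifies the minimizing scale $\tau_\eps$ on the large-scale branch.
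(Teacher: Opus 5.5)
Your proof is correct and is essentially the paper's argument: combine \eqref{eq:nu-comparison-pre} from Proposition~\ref{prop:entropy-comparison} with the value of $\Phi_n(\eps)$ from Proposition~\ref{prop:tilted-minimum}, then derive the strict inequality from \eqref{eq:volume-violation-entropy-0}. Your Bonnet--Myers remark on compactness is a harmless extra check that the paper leaves implicit here and makes at the start of the proof of the main theorem.
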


\begin{proof}
Inequality \eqref{eq:entropy-sharp-factor} is a direct consequence of Proposition \ref{prop:entropy-comparison} and
Proposition \ref{prop:tilted-minimum}. The implication
\eqref{eq:volume-violation-entropy} now follows immediately from \eqref{eq:entropy-sharp-factor} and \eqref{eq:volume-violation-entropy-0}.
\end{proof}

\begin{lemma}\label{lem:round-flow-upper}
Suppose $V_g(M)=|\Sph^n|$ and the volume-normalized Ricci flow $g(t)$ starting at
$g$ exists for all $t\geq0$ and converges smoothly  to a metric of constant sectional curvature one. Then
\begin{equation}\label{eq:nu-upper}
\nu(g)\leq\nu(g_{\Sph^n}).
\end{equation}
\end{lemma}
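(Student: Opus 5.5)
The plan is to combine monotonicity of $\nu$ along the normalized flow (Corollary~\ref{cor:normalized-monotonicity}) with upper semicontinuity of $\nu$ under smooth convergence. By Corollary~\ref{cor:normalized-monotonicity}, $\nu(g)\le\nu(g(t))$ for every $t\ge0$. It therefore suffices to show
\[
\limsup_{t\to\infty}\nu(g(t))\le\nu(g_\infty).
\]
Here $g_\infty$ is the limit metric of constant sectional curvature one. By construction, $V_{g_\infty}(M)=|\Sph^n|$.

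First I identify $\nu(g_\infty)$ with $\nu(g_{\Sph^n})$. A closed manifold of constant curvature one is a quotient $\Sph^n/\Gamma$, and the volume of $g_\infty$ equals $|\Sph^n|$. Hence $\Gamma$ is trivial, and by \eqref{eq:diffeo-mu} $(M,g_\infty)$ is isometric to the round sphere. This gives $\nu(g_\infty)=\nu(g_{\Sph^n})=C_n\bigl(\tfrac1{2(n-1)}\bigr)$ by Lemma~\ref{lem:sphere-mu}(3).

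Next I do not need full continuity of $\nu$, only an upper bound obtained from one test pair. Take $\tau_0=\frac1{2(n-1)}$ and the constant test function $v\equiv1$, which satisfies \eqref{eq:L2-normalization} for every metric. Then
\[
\nu(g(t))\le\mu(g(t),\tau_0)\le\W_{g(t)}(1,\tau_0)=\tau_0\,\overline R_{g(t)}+\log V_{g(t)}(M)-\frac n2\log(4\pi\tau_0)-n .
\]
The volume is preserved along the normalized flow, so $V_{g(t)}(M)=|\Sph^n|$. Smooth convergence gives $\overline R_{g(t)}\to n(n-1)$. Therefore the right side tends to $C_n(\tau_0)=\nu(g_{\Sph^n})$, and letting $t\to\infty$ in $\nu(g)\le\nu(g(t))$ yields \eqref{eq:nu-upper}.

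The only delicate points are the following, and none is a real obstacle.
\begin{itemize}
\item The volume normalization: the flow \eqref{eq:normalized-RF} has $\frac{d}{ds}V=\int(-R+\overline R)\,dV=0$.
\item The applicability of Corollary~\ref{cor:normalized-monotonicity} for all $s\ge0$, which holds since the flow is smooth on $[0,\infty)$.
\end{itemize}
The test-function approach avoids any need for lower semicontinuity or for existence of minimizers of $\mu(g(t),\tau)$.
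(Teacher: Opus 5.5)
Your proof is correct and follows the paper's argument: you use monotonicity of $\nu$ along the normalized flow, then the bound $\nu(g(t))\le\mu(g(t),\tau_0)\le\W_{g(t)}(1,\tau_0)$ with $\tau_0=\frac{1}{2(n-1)}$, together with volume preservation and $\overline R_{g(t)}\to n(n-1)$. Your extra step identifying $(M,g_\infty)$ with the round sphere via the space-form classification is valid but unnecessary, since the constant test function already gives the limit $C_n(\tau_0)=\nu(g_{\Sph^n})$ directly.
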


\begin{proof}
Corollary \ref{cor:normalized-monotonicity} gives
$\nu(g)\leq\nu(g(t))$.  The normalized flow preserves volume, so all metrics
have volume $|\Sph^n|$. For  $\tau_0=1/[2(n-1)]$,
using  \eqref{eq:W-probability},  we have
\begin{equation}\label{eq:constant-test-flow}
W_{g(t)}(1,\tau_0)
=\tau_0\overline R_{g(t)}+\log |\Sph^n|
-\frac n2\log(4\pi\tau_0)-n.
\end{equation}
Smooth convergence  implies 
$\overline R_{g(t)}\to n(n-1)$. Hence, using \eqref{eq:Cn} and Lemma \ref{lem:sphere-mu}, one has 
\begin{equation}\label{eq:constant-test-limit}
W_{g(t)}(1,\tau_0)\longrightarrow C_n(\tau_0)=\nu(g_{\Sph^n}).
\end{equation}
For every $t$, one has 
$$
\nu(g)\leq\nu(g(t))\leq\mu(g(t),\tau_0)
\leq W_{g(t)}(1,\tau_0).
$$
Letting $t\to\infty$ proves \eqref{eq:nu-upper}.  This completes the proof.
\end{proof}
{Now we consider the equality condition in above entropy monotonicity. For the reader's convenience, we put the proof below.
\begin{lemma}\label{lemma:equalrigid}
    Under the assumptions of Lemma \ref{lem:round-flow-upper}, suppose in addition that $\nu(g(0))=\nu(g_{\Sph^n})$, then $$(M, g(0))\cong(\Sph^n, g_{\Sph^n}).$$
\end{lemma}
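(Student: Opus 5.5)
The plan is to exploit the fact that equality forces equality in Perelman's monotonicity along the whole normalized flow, and hence that $g(0)$ is a shrinking gradient Ricci soliton; on a closed manifold this makes $g(0)$ an Einstein metric, and then the normalization $V_{g(0)}=|\Sph^n|$ together with the smooth convergence to a round limit pins it down.

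\textbf{Step 1 (constancy of $\nu$).} By Corollary~\ref{cor:normalized-monotonicity} and the proof of Lemma~\ref{lem:round-flow-upper},
\[
\nu(g_{\Sph^n})=\nu(g(0))\le\nu(g(s))\le W_{g(s)}(1,\tau_0)\to\nu(g_{\Sph^n}),
\]
and $\nu(g(s))$ is nondecreasing. Hence $\nu(g(s))\equiv\nu(g_{\Sph^n})$ for all $s\ge0$, and likewise for the unnormalized rescaled flow $\widetilde g(t)$ built in Corollary~\ref{cor:normalized-monotonicity}.

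\textbf{Step 2 (the infimum in $\tau$ is attained).} We need a $\tau_*>0$ with $\mu(g(0),\tau_*)=\nu(g(0))$. I would first check that $\nu(g(0))<\log\bigl(V_{g(0)}(M)\cdot\ldots\bigr)$, i.e.\ strictly below the small-$\tau$ limit $\lim_{\tau\to0}\mu(g,\tau)=0$. This holds because $\nu(g_{\Sph^n})=C_n(\frac1{2(n-1)})<0$. For large $\tau$, whenever the Yamabe/first eigenvalue quantity of $-4\Delta+R$ is positive, $\mu\to+\infty$, since the $\tau\lambda$ term dominates $-\frac n2\log\tau$. Positivity holds here: $\widetilde g(t)$ becomes round, and the invariance of $\nu$ under the flow lets us conclude. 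More robustly, I would argue that if $\nu$ were not attained, the approximating $\tau$ would run to $0$ or $\infty$; both are excluded by the strict negativity above and by positivity of the scalar curvature operator, which in turn follows from the flow exactly as in Perelman's treatment of $\nu$ on closed manifolds.

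\textbf{Step 3 (soliton equation).} Take a smooth minimizer $v_*$ for $\mu(g(0),\tau_*)$; this is standard by Rothaus' regularity, and $v_*>0$. Pass to $\widetilde g$ and evolve $v$ backward from a minimizer at a later time exactly as in the proof of Proposition~\ref{prop:invariance}(2). Then
\[
\nu\le\mu(\widetilde g(0),\tau_*)\le\mathcal W_{\widetilde g(0)}(v(0),\tau_*)\le \mathcal W_{\widetilde g(t)}(v(t),\tau_*-t)=\mu(\widetilde g(t),\tau_*-t).
\]
Rather than this direction, I would start the backward solution at time $t$ from a minimizer $v_t$ for $\mu(\widetilde g(t),\tau_*-t)$. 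The chain $\nu\le\mathcal W_{\widetilde g(0)}(\cdot)\le\mathcal W_{\widetilde g(t)}(v_t,\tau_*-t)$ together with the fact that the right side is at least $\nu$... To close this chain I need the right side equal to $\nu$. So instead I would choose the minimizing scale $\tau_t$ for $\nu(\widetilde g(t))$ at a small time $t$ and set $\tau(0)=\tau_t+t$. The chain then reads
\[
\nu\le\mu(\widetilde g(0),\tau(0))\le\mathcal W_{\widetilde g(0)}\le\mathcal W_{\widetilde g(t)}=\nu.
\]
Consequently the integral in \eqref{eq:Perelman-W-derivative} vanishes on $[0,t]$, giving
\[
\Ric-2\nabla^2\log v=\frac{g}{2\tau}.
\]
So $g(0)$ is a gradient shrinking soliton.

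\textbf{Step 4 (conclusion).} A compact shrinking soliton evolves under the flow by diffeomorphisms and scaling. Hence the normalized flow $g(s)$ is $\phi_s^*g(0)$ up to diffeomorphism, and its limit $g_\infty$ is isometric to $g(0)$. Therefore $g(0)$ has constant curvature one and volume $|\Sph^n|$. The universal cover is then the round sphere, and the volume equality forces the covering group to be trivial, so $(M,g(0))\cong(\Sph^n,g_{\Sph^n})$.

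\textbf{Main obstacle.} I expect Step 2 and the attainment bookkeeping in Step 3 to be the hardest part: one must ensure the minimizing scales exist and stay in a compact range for small $t$. I would try to handle this first via the asymptotics $\mu\to0$ as $\tau\to0$ and $\mu\to\infty$ as $\tau\to\infty$.
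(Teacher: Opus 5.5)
Your proposal is correct and is essentially the paper's argument. It runs through the same steps: constancy of $\nu$ along the flow; attainment of $\nu(\widetilde g(T))$ at a finite scale, using $\nu(g_{\Sph^n})<0$, $\mu\to0$ as $\tau\to0$ and $\mu\to+\infty$ as $\tau\to\infty$; backward conjugate heat flow from that minimizer, so the integrated monotonicity formula forces the shrinker equation at time $0$; and finally self-similarity, the round limit and the volume normalization. Two small remarks:
\begin{enumerate}
\item Positivity of the first eigenvalue of $-4\Delta+R$ follows in one line from $\nu>-\infty$: if $\lambda_1\le0$, testing $\W$ with the first eigenfunction sends $\mu(\cdot,\tau)\to-\infty$ as $\tau\to\infty$. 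This is exactly how the paper closes your Step 2, and it is cleaner than appealing to the flow.
\item Your opening claim that a closed shrinking soliton is Einstein is false in general, e.g.\ the Koiso--Cao solitons. Your Step 4 rightly does not use it.
\end{enumerate}
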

\begin{proof}
    Fix $s_0>0$. The time-shifted flow $s\to g(s_0+s)$ satisfies the same hypotheses as the original flow. Applying Corollary \ref{cor:normalized-monotonicity} and Lemma \ref{lem:round-flow-upper}, we have 
    \begin{equation}
        \nu(g_{\Sph^n})=\nu(g(0))\leq \nu(g(s_0))\leq\nu(g_{\Sph^n}).
    \end{equation}
Therefore, \begin{equation}\label{eq:nunusn}
\nu(g(s))\equiv\nu(g_{\Sph^n})\text{ for all } s\geq0.
\end{equation}
Fix $S>0$ and define $\alpha(s)$ and $t(s)$ as in \eqref{eq:alpha-def} for $0\le s\le S$. Let $\widetilde{g}(t(s))=\alpha(s)g(s)$, then, $\widetilde{g}(t)$ is an unnormalized Ricci flow,
$$\partial_t \widetilde{g}=-2\Ric_{ \widetilde{g}(t)},\qquad \widetilde{g}(0)=g(0).$$
By scale invariance \eqref{eq:scaling-mu} and \eqref{eq:nunusn}, we have 
 \begin{equation}\label{eq:widnusn}
\nu(\widetilde{g}(t))\equiv\nu(g_{\Sph^n})\text{ for all } 0\leq t\leq T:=t(S).
\end{equation}
\textbf{Claim:} At time $T$, there are $0<\tau_T<+\infty$ and $v_T> 0$ such that 
\begin{equation}
\nu(\widetilde{g}(T))=\W_{\widetilde{g}(T)}(v_T, \tau_T),
\end{equation}
where $$\int_Mv^2_T\dbarV{\widetilde{g}(T)}=1.$$
By \eqref{eq:Cn}, \eqref{eq:sphere-nu} and the corresponding inequality of Stirling's approximation, we know 
\begin{equation}\label{eq:neganu}
\nu(\widetilde{g}(T))=\nu(g_{\Sph^n})=\log\left(|\Sph^n|\left(\frac{n-1}{2\pi e}\right)^{\frac{n}{2}}\right)<0.
\end{equation}
Let $\lambda_{1}$ be the first eigenvalue of $-4\Delta_{\widetilde{g}(T)}+R_{\widetilde{g}(T)}$, then $\lambda_1>0$. Otherwise, assume $\lambda_1\le0$ and let $\phi>0$ be the corresponding eigenfunction with $$\int_M\phi^2\dbarV{\widetilde g(T)}=1.$$
Then, $$\W_{\widetilde g(T)}(\phi, \tau)=\lambda_1\tau-\frac{n}{2}\log(4\pi\tau)-\int_M\phi^2\log(\phi^2)\dbarV{\widetilde{g}(T)}+\log V_{\widetilde{g}(T)}-n.$$
Since $\lambda_1\le0$, we have $\W_{\tilde g(T)}(\phi, \tau)\to-\infty$ as $\tau\to +\infty$, which implies $\nu(\widetilde{g}(T))=-\infty$. This contradicts \eqref{eq:neganu}.

By \cite[Section 3.1]{Perelman} and note $M$ is closed and $\lambda_1>0$, we know
\begin{equation}\label{eq:mu0}
\lim_{\tau\to0}\mu(\widetilde{g}(T), \tau)=0,
\end{equation}
and 
\begin{equation}\label{eq:muinfty}
    \lim_{\tau\to+\infty}\mu(\widetilde{g}(T), \tau)=+\infty.
\end{equation}
By the continuity of $\mu(g, \tau)$ with respect to $\tau$ on the fixed closed manifold and \eqref{eq:neganu}, we know the infimum of $\mu(\widetilde{g}(T), \tau)$ is strictly negative, and there is $\tau_T\in(0, +\infty)$ such that 
$$\mu(\widetilde{g}(T), \tau_T)=\nu(\widetilde{g}(T)).$$
For this fixed scale $\tau_T$, the direct method and regularity theory of elliptic equations give a smooth minimizer $v_T\ge 0$ satisfying 
\begin{equation}\label{W(g(T))=nug(T)}
    \int_M v_T^2\dbarV{\widetilde{g}(T)}=1,\qquad \W_{\widetilde{g}(T)}(v_T,\tau_T)=\nu(\widetilde{g}(T)).
\end{equation}
The Euler–Lagrange equation and the strong maximum principle imply $v_T>0$. Thus, we complete the proof of the claim.

Next, set $u_T=v_T^2$. Starting from $u_T$ at time $T$, solve backward the
normalized conjugate heat equation
\begin{equation}\label{eq:q-backward}
 \partial_tu=-\Delta_{\widetilde g(t)}u
 +\bigl(R_{\widetilde g(t)}
 -\overline R_{\widetilde g(t)}\bigr)u,
\end{equation}
and put
\begin{equation}\label{eq:v-tau-backward}
 v(t):=\sqrt{u(t)},
 \qquad
 \tau(t):=\tau_T+T-t.
\end{equation}
The solution remains positive and normalized. Integrating
\eqref{eq:Perelman-W-derivative} from $0$ to $T$ gives
\begin{align}
 &\W_{\widetilde g(T)}(v_T,\tau_T)
 -\W_{\widetilde g(0)}(v(0),\tau(0)) \notag\\
 &\quad=2\int_0^T\tau(t)\int_M
 \left|\Ric_{\widetilde g(t)}-2\nabla^2\log v(t)
 -\frac{\widetilde g(t)}{2\tau(t)}\right|^2
 v(t)^2\,\dbarV {\widetilde g(t)}\,dt.       \label{eq:integrated-square}
\end{align}
Notice that 
\[
 \W_{\widetilde g(0)}(v(0),\tau(0))
 \geq\nu(\widetilde g(0)).
\]
Using \eqref{eq:nunusn} and the fact $v(t)>0$,   we have
\begin{equation}\label{eq:v-shrinker-all-times}
 \Ric_{\widetilde g(t)}-2\nabla^2\log v(t)
 =\frac{\widetilde g(t)}{2\tau(t)}
 \qquad\text{on }M\times[0,T].
\end{equation}
In particular, recalling $\widetilde{g}(0)=g(0)$, one has
\begin{equation}
     \Ric_{g(0)}-2\nabla^2\log v(0)
 =\frac{ g(0)}{2\tau(0)}
\end{equation}
Thus, there exists a family of  diffeomorphism $\Phi_t$ such that 
\begin{equation}\label{pullback}
    {g}(t)=\Phi_t^*(g(0)).
\end{equation}
Since $g(t)\to g_\infty$ smoothly and $g_\infty$ has constant sectional curvature one,  $g(0)$ has constant sectional curvature one. Noticing  that $V_{g(0)}(M)=|\Sph^n|$, the Bishop-Gromov theorem yields that $(M,g(0))$ is isometric to standard sphere.

\end{proof}
}

\section{Proof of the main result}\label{sec:proof}

Up to scaling, Theorem \ref{thm:main-theorem} is equivalent to the following theorem.

\begin{theorem}
        Let $(M^n,g)$ be a connected, complete,  smooth Riemannian manifold with $n\geq 3$. There exists a positive constant $\tilde\varepsilon_n>0$ such that, if $0<\varepsilon\leq \tilde\varepsilon_n$ and \eqref{Kwong's-condition} holds,   then \eqref{Bray's-conjecture} holds. {Moreover, the equality in \eqref{Bray's-conjecture} holds if and only if $(M, g)\cong(\Sph^n, (1+\eps)^{-1}g_{\Sph^n})$.}
\end{theorem}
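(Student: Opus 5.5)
Argue by contradiction, combining Corollary \ref{cor:entropy-lower} with Ma--Wang's Theorem \ref{thm:MaWang} and Lemma \ref{lem:round-flow-upper}.

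\emph{Compactness.} Since $\Ric_g\ge(n-1)g$, Myers' theorem shows $M$ is closed, so all results of Sections \ref{sec:pre}--\ref{sec:keylemma} apply.

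\emph{Setting up the contradiction.} Suppose \eqref{Bray's-conjecture} fails, i.e. $V_g(M)>(1+\eps)^{-n/2}|\Sph^n|$. Choose $\tilde\eps_n\le\eps_{\ent}(n)$. Corollary \ref{cor:entropy-lower} then gives $\nu(g)>\nu(g_{\Sph^n})$. Bishop's theorem gives $V_g(M)\le|\Sph^n|$. Rescale to $\hat g:=c\,g$ with $c=(|\Sph^n|/V_g(M))^{2/n}$, so that $V_{\hat g}(M)=|\Sph^n|$. Because $V_g(M)>(1+\eps)^{-n/2}|\Sph^n|$, we get $1\le c<1+\eps$. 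By \eqref{eq:scaling-mu}, $\nu(\hat g)=\nu(g)>\nu(g_{\Sph^n})$.

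\emph{Verifying the Ma--Wang hypothesis.} We have $\Ric_{\hat g}=\Ric_g\ge(n-1)g=\frac{n-1}{c}\hat g$. Hence $\rho_{\hat g}\le (n-1)(1-1/c)\le (n-1)\eps$ pointwise, and
\[
\int_M\rho_{\hat g}^p\,dV_{\hat g}\le \bigl((n-1)\eps\bigr)^p|\Sph^n|.
\]
Fix $p=n$. Then shrink $\tilde\eps_n$ further so that $((n-1)\tilde\eps_n)^n|\Sph^n|<\delta_{\MW}(n,n)$. Theorem \ref{thm:MaWang} now says the normalized Ricci flow from $\hat g$ exists for all time and converges to a round metric. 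Lemma \ref{lem:round-flow-upper} gives $\nu(\hat g)\le\nu(g_{\Sph^n})$, which is the contradiction. The only point needing care is that the smallness parameter is uniform: it depends only on $n$ through $\delta_{\MW}$ and $\eps_{\ent}$. So $\tilde\eps_n=\min\{\eps_{\ent}(n),\,(n-1)^{-1}(\delta_{\MW}(n,n)/|\Sph^n|)^{1/n}/2\}$ works.

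\emph{Equality case.} Suppose $V_g(M)=(1+\eps)^{-n/2}|\Sph^n|$. Then $c=1+\eps$, and \eqref{eq:entropy-sharp-factor} gives $\nu(\hat g)=\nu(g)\ge\nu(g_{\Sph^n})$. The same Ma--Wang argument still applies, since it needed only $c<1+\eps$ up to equality with the same bound. Lemma \ref{lem:round-flow-upper} then forces $\nu(\hat g)=\nu(g_{\Sph^n})$. Lemma \ref{lemma:equalrigid} yields $(M,\hat g)\cong(\Sph^n,g_{\Sph^n})$, so $g=(1+\eps)^{-1}\hat g$ is the claimed model. Conversely, that model satisfies \eqref{Kwong's-condition} with equality in volume.

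The main obstacle, i.e. producing a global flow to the round sphere, is entirely absorbed by Theorem \ref{thm:MaWang}. The only delicate point is the pointwise bound on $\rho_{\hat g}$ after volume normalization, which follows from the sandwich $1\le c\le1+\eps$.
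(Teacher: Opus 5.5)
Your proposal is correct and follows the paper's proof essentially step for step: Myers' theorem for compactness, Corollary \ref{cor:entropy-lower} for $\nu(g)>\nu(g_{\Sph^n})$, the volume-normalizing rescaling with $1\le c<1+\eps$, Ma--Wang with $p=n$ plus Lemma \ref{lem:round-flow-upper} for the contradiction, and Lemma \ref{lemma:equalrigid} for rigidity. The only difference is cosmetic: you use the cruder pointwise bound $\rho_{\hat g}\le(n-1)\eps$ and add a factor $\tfrac12$ to $\tilde\eps_n$ so the Ma--Wang inequality stays strict (also when $c=1+\eps$), whereas the paper uses $\rho_h\le(n-1)\eps/(1+\eps)<(n-1)\eps$ and needs no extra factor.
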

\begin{proof}
First, Bonnet-Myers thereom implies that $M$ is compact. 
Let $\delta_{MW}(n,n)$ be the constant from Theorem \ref{thm:MaWang} and $\varepsilon_{\ent}(n)$ in \eqref{eq:eps-ent}, and define
\begin{equation}\label{eq:epsilon-n-definition}
\tilde\varepsilon_n
:=\min\left\{
\varepsilon_{\ent}(n),
\left(
\frac{\delta_{\MW}(n,n)}{|\Sph^n|(n-1)^n}
\right)^{1/n}
\right\}.
\end{equation}
Both terms inside the minimum are positive, so $\tilde\eps_n>0$.

Fix $0<\eps\leq\tilde \eps_n$ and suppose, for contradiction, that
\begin{equation}\label{eq:contradiction-volume}
V_g(M)>|\Sph^n|(1+\eps)^{-n/2}.
\end{equation}
Since $0<\eps\leq\eps_{\ent}(n)$, Corollary~\ref{cor:entropy-lower} gives
\begin{equation}\label{eq:main-lower}
\nu(g)>\nu(g_{\Sph^n}).
\end{equation}
Bishop's theorem gives $V_g(M)\leq |\Sph^n|$. Define
\begin{equation}\label{eq:main-rescale}
c:=\left(\frac{|\Sph^n|}{V_g(M)}\right)^{2/n},
\qquad h:=cg.
\end{equation}
Thus, one has 
\begin{equation}\label{eq:c-range}
V_h(M)=|\Sph^n|,
\qquad 1\leq c<1+\eps.
\end{equation}
Under constant rescaling, the Ricci tensor viewed as a $(0,2)$ tensor, is unchanged, so
\begin{equation}\label{eq:rescaled-Ricci}
\Ric_h=\Ric_g\geq(n-1)g=\frac{n-1}{c}h.
\end{equation}
The deficit \eqref{eq:rho-def} therefore satisfies
\begin{equation}\label{eq:defect-pointwise}
0\leq\rho_h\leq(n-1)\left(1-\frac1c\right)
<(n-1)\frac{\eps}{1+\eps}.
\end{equation}
Consequently,
\begin{equation}\label{eq:defect-integral}
\int_M\rho_h^n\,dV_h
<|\Sph^n|(n-1)^n\left(\frac{\eps}{1+\eps}\right)^n<\delta_{\MW}(n,n).
\end{equation}
Theorem \ref{thm:MaWang} applies to $h$, and
Lemma \ref{lem:round-flow-upper} gives $\nu(h)\leq\nu(g_{\Sph^n})$. Scale
invariance \eqref{eq:scaling-mu} gives
\begin{equation}\label{eq:main-upper}
\nu(g)=\nu(h)\leq\nu(g_{\Sph^n}),
\end{equation}
contradicting \eqref{eq:main-lower}. Thus \eqref{Bray's-conjecture} holds.

{Now we assume the equality in \eqref{Bray's-conjecture}: 
\begin{equation*}
V_g(M)=(1+\eps)^{-\frac{n}{2}}|\Sph^n|.    
\end{equation*}
The entropy comparison Corollary \ref{cor:entropy-lower} yields
\begin{equation}\label{eq:equalnu}
\nu(g)\geq\nu(g_{\Sph^n})+\log(1+\eps)^{-\frac{n}{2}}
+\frac n2\log(1+\eps)=\nu(g_{\Sph^n}).
\end{equation}
Now by definition in \eqref{eq:main-rescale}, we know $c=1+\eps$ and $h=(1+\eps)g$. In this case,
$$0\leq \rho_h\leq (n-1)\frac{\eps}{1+\eps}.$$
Note the last inequality in \eqref{eq:defect-integral} is strict, we can still apply Theorem \ref{thm:MaWang} to $h$ and finally get \eqref{eq:main-upper}.
Combining \eqref{eq:main-upper} and \eqref{eq:equalnu}, we have $\nu(h)=\nu(g_{\Sph^n}).$ By Lemma \ref{lemma:equalrigid}, we obtain $(M, h)\cong(\Sph^n, g_{\Sph^n})$, and therefor, $(M, g)\cong(\Sph^n, (1+\eps)^{-1}g_{\Sph^n})$. Thus, we complete the proof.}

\end{proof}

\section{Applications to  volume comparison about $Q$-curvature}\label{sec:application}

For a Riemannian manifold $(M^n,g)$ with dimension $n\geq 3$, the fourth-order Branson $Q$-curvature is given by
\begin{equation}\label{Q-def}
    Q_g=-\frac{1}{2(n-1)}\Delta_gR_g-\frac{2}{(n-2)^2}\left|\operatorname{Ric}_g-\frac{R_g}{n}g\right|_g^2+\frac{n^2-4}{8n(n-1)^2}R_g^2.
\end{equation}
Using such definition, if $\operatorname{Ric}_g\equiv (n-1)g$, it is easy to show that
$$
Q_g\equiv \frac{n(n^2-4)}{8}.
$$
Lin and Yuan \cite{LinYuan1,LinYuan2} studied the deformation of the $Q$-curvature and obtained volume comparison theorems using it. More background material can be found in these papers and the references therein.

More precisely, in Theorem 1.7 and Corollary 1.10 of \cite{LinYuan2}, for a stable Einstein manifold $(M^n,\bar g)$ with $\operatorname{Ric}_{\bar g}=(n-1)\bar g$, Lin and Yuan showed that if the metric $g$ is sufficiently close to $\bar g$ in the $C^4$-topology and
$$
Q_g\geq \frac{n(n^2-4)}{8},
$$
then $V_g(M^n)\leq V_{\bar g}(M^n)$. As an interesting application of Bray's conjecture in the context of $Q$-curvature, we obtain the following theorem.

\begin{theorem}
Let $(M^n,g)$ be a connected, complete, smooth Riemannian manifold of dimension $n\geq 3$. There exists a positive constant $\varepsilon_n<1$ such that if the Ricci curvature satisfies
$$
\operatorname{Ric}_g\geq \varepsilon_n (n-1)g
$$
and the $Q$-curvature defined in \eqref{Q-def} satisfies
$$
Q_g\geq \frac{n(n^2-4)}{8},
$$
then
$$
V_g(M^n)\leq |\mathbb{S}^n|,
$$
and the equality holds if and only if $(M^n, g)$ is isometric to standard sphere.
\end{theorem}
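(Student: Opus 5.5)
The plan is to reduce this to the scalar-curvature version of Bray's conjecture (Theorem \ref{thm:main-theorem}) by showing that the $Q$-curvature lower bound, combined with the near-Einstein Ricci lower bound, forces a scalar curvature lower bound in an integrated sense. The pointwise route cannot work directly, because of the term $-\frac{1}{2(n-1)}\Delta_g R_g$. However, this term integrates to zero on the closed manifold; Bonnet--Myers gives compactness once $\varepsilon_n>0$.

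The proof proceeds in the following steps.

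\textbf{Step 1 (integrate the $Q$-bound).} Integrating \eqref{Q-def} gives
\[
\frac{n(n^2-4)}{8}V_g(M)\le \int_M Q_g\,dV_g\le \frac{n^2-4}{8n(n-1)^2}\int_M R_g^2\,dV_g .
\]
We drop the traceless Ricci term, which has a favorable sign.

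\textbf{Step 2 (rescale).} Write $\Ric_g\ge \varepsilon_n(n-1)g$. Since the constant $\varepsilon_n$ is ours to choose, we may take it close to $1$ and rescale so that $\Ric\ge(n-1)g$. Under that rescaling the $Q$-bound and the volume bound scale compatibly: both $Q$ and $R^2$ have weight $-4$, so the inequality becomes
\[
\overline{R^2}\ge (1+\varepsilon)^2 n^2(n-1)^2,
\]
which is an $L^2$-average scalar bound rather than a pointwise one.

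\textbf{Step 3 (adapt the entropy argument).} I would then rerun the argument of Section \ref{sec:keylemma}, with the following changes.
\begin{enumerate}
\item Proposition \ref{prop:entropy-comparison} uses the pointwise bound $R_g\ge n(n-1)(1+\varepsilon)$ only through $\int R_g v^2$. Applying it to the constant test function $v\equiv1$ needs only $\overline R_g$, but the infimum over $v$ needs the pointwise bound.
\item So an alternative is Bray's original approach. Bray-type volume comparison uses isoperimetric profiles and needs pointwise $R$.
\item Hence I would instead aim to use the traceless term. The pointwise identity rearranges to
\[
\frac{n^2-4}{8n(n-1)^2}R^2-\frac{n(n^2-4)}{8}\ge \frac{1}{2(n-1)}\Delta R+\frac{2}{(n-2)^2}|E|^2 ,
\]
where $E$ denotes the traceless Ricci tensor. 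At a minimum point of $R$ we have $\Delta R\ge 0$, so $R_{\min}^2\ge n^2(n-1)^2$, hence $R\ge n(n-1)$ pointwise, using $R>0$ from the Ricci bound.
\end{enumerate}
This maximum principle step is the key: it upgrades the $Q$-bound to the pointwise bound $R_g\ge n(n-1)$.

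\textbf{Step 4 (apply Theorem \ref{thm:main-theorem}).} With $R_g\ge n(n-1)$ and the Ricci bound in hand, Theorem \ref{thm:main-theorem} applies directly and gives $V_g(M)\le|\Sph^n|$, with $\varepsilon_n$ the same constant as there. For equality, Theorem \ref{thm:main-theorem} gives isometry to the round sphere. Conversely, the round sphere has $Q=\frac{n(n^2-4)}{8}$ and equal volume, so the equality characterization holds in both directions.

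\textbf{Main obstacle.} I expect the hardest part to be the minimum-point argument in Step 3, and specifically verifying the sign bookkeeping: the traceless term is subtracted in $Q$, so it helps in exactly the right direction. If a sign there fails, I would fall back to the integrated version from Step 1 combined with a test-function argument in the entropy.
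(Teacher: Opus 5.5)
Your Step 3(3) and Step 4 are correct and match the paper's proof exactly. Bonnet--Myers gives compactness and $R_g>0$ follows from the Ricci bound. At a minimum point of $R_g$ one has $\Delta_g R_g\ge 0$, so the $Q$-bound forces $R_g\ge n(n-1)$ everywhere, and Theorem \ref{thm:main-theorem} then gives the volume bound and rigidity. Steps 1--2 and the first two items of Step 3 are unnecessary detours and can be dropped.
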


\begin{proof}
First, Bonnet-Myers theorem implies that the manifold must be compact.
From the Ricci curvature assumption, taking the trace gives $R_g>0$. Let $x_0$ be a minimum point of $R_g$ on $M^n$. Then $\Delta_g R_g(x_0)\geq 0$. Using \eqref{Q-def} and the assumption on $Q$-curvature, we get
$$
\frac{n(n^2-4)}{8}\leq Q_g(x_0)\leq \frac{n^2-4}{8n(n-1)^2}R_g(x_0)^2,
$$
which yields
$$
R_g(x_0)\geq n(n-1).
$$
Thus $R_g\geq n(n-1)$ everywhere. Applying Theorem \ref{thm:main-theorem} finishes the proof.
\end{proof}

\begin{remark}
For a complete four-dimensional manifold $(M^4,g)$, we only need to assume $R_g\geq 0$ and $Q_g\geq 6$.  By using Bonnet-Myers type theorem in \cite[Theorem 1.1]{jiangliwang}, the manifold must be compact. Then one can obtain $V_g(M^4)\leq |\mathbb{S}^4|$ by using Gursky's inequality \cite{Gursky} (see Theorem 1.6 of \cite{liwei}).
\end{remark}

\bibliography{bib}
\bibliographystyle{plain}

\end{document}